\newcommand{\Subsection}[1]{\subsection{ #1} ${}^{}$}
\newcommand{\Subsubsection}[1]{\subsubsection{ #1} ${}^{}$}
\newtheorem{theorem}{Theorem}[section]
\newtheorem{lemma}[theorem]{Lemma}
\newtheorem{remark}[theorem]{Remark} 
\newtheorem{corollary}[theorem]{Corollary}
\newtheorem{example}[theorem]{Example}
\newcounter{hypo}
\newcounter{hyphyp}
\def\N{{\mathbb N}} 
\def\R{{\mathbb R}} 
\def\Z{{\mathbb Z}}
\def\T{{\mathbb T}}
\def\CO{\mathcal {O}}
\def\im{\mathop{\rm Im}\nolimits}
\def\supp{\mathop {\rm supp}\nolimits}
\def\<{\langle}
\def\>{\rangle}
\newcommand{\dN}{\mathbb{N}}
\newcommand{\dT}{\mathbb{T}}
\newcommand{\dR}{\mathbb{R}}
\newcommand{\dE}{\mathbb{E}}
\newcommand{\dP}{\mathbb{P}}
\newcommand{\cN}{\mathcal{N}}
\newcommand{\cW}{\mathcal{W}}
\newcommand{\wh}{\widehat}
\newcommand{\ind}{\mbox{1}\kern-.25em \mbox{I}}
\def\build#1_#2^#3{\mathrel{\mathop{\kern 0pt#1}\limits_{#2}^{#3}}}
\def\videbox{\mathbin{\vbox{\hrule\hbox{\vrule height1ex \kern.5em
\vrule height1ex}\hrule}}}
\title[Spectrum of the product of Toeplitz matrices]{Spectrum of the product of Toeplitz matrices with application in probability}
\author[B. Bercu]{Bernard Bercu}
\address{Institut de Math\'ematiques de Bordeaux, 
UMR 5251, Universit\'e Bordeaux 1, 351 cours de la lib\'eration, 
33405 Talence cedex, France.}
\email{Bernard.Bercu@math.u-bordeaux1.fr}
\author[J.-F. Bony]{Jean-Fran\c{c}ois Bony}
\email{Jean-Francois.Bony@math.u-bordeaux1.fr}
\author[V. Bruneau]{Vincent Bruneau}
\email{Vincent.Bruneau@math.u-bordeaux1.fr}
\keywords{Toeplitz matrices, distribution of eigenvalues, large deviations}
\subjclass[2000]{47B35, 60F10, 15A18}
\begin{document}

\begin{abstract}
We study the spectrum of the product of two Toeplitz operators. Assume that the symbols of these operators are continuous and real-valued and that one of them is non-negative. We prove that the spectrum of the product of finite section Toeplitz matrices converges to the spectrum of the product of the semi-infinite Toeplitz operators. We give an example showing that the supremum of this set is not always the supremum of the product of the two symbols. Finally, we provide an application in probability which is the first motivation of this study. More precisely, we obtain a large deviation principle for Gaussian quadratic forms.

\end{abstract}

\maketitle

%\setcounter{tocdepth}{3}
%\tableofcontents

\section{Introduction}

For any bounded measurable real function $f$ on the torus
$\T = \R / 2 \pi \Z$, the $\ell^2(\dN)$ Toeplitz and Hankel operators are respectively given by
\begin{equation}
\label{deftoephank}
T(f)=\Big(\wh{f}_{i-j}\Big)_{\!i,j\geq 0}
\qquad \text{ and } \qquad
H(f)=\Big(\wh{f}_{i+j+1}\Big)_{\!i,j\geq 0}
\end{equation}
where $(\wh{f}_n)$ stands for the sequence of Fourier coefficients of $f$. We refer to the book of B\"{o}ttcher and Silbermann \cite{BoSi06_01} for a general presentation of Toeplitz operators.
A well-known identity between the product $T(f)T(g)$ and $T(fg)$ is
\begin{equation}
\label{prodtoep}
T(fg)-T(f)T(g)=H(f)H(J g)
\end{equation}
where $J g(x)=g(-x)$. In all the sequel, we shall denote by $T_n(f)$ the section of
order $n\geq 0$ of $T(f)$ which means that $T_n(f)$ is identified with $\Pi_n T(f) \Pi_n$
where $\Pi_n$ is the orthogonal projection operator on the Fourier modes $0 , \ldots ,n$. The analogue of identity
(\ref{prodtoep}) for finite section Toeplitz matrices is given by the formula of Widom \cite{Wi76_01}
\begin{equation}
\label{prodtoepn}
T_n(fg)-T_n(f)T_n(g)= \Pi_{n} H(f) H(J g) \Pi_{n} + \widetilde{\Pi}_{n} H(J f)H(g)\widetilde{\Pi}_{n} ,
\end{equation}
where $\widetilde{\Pi}_{n} = \Pi_{n} e^{i n x} J \Pi_{n}$.

The asymptotic behavior of the spectrum of a single Toeplitz matrix, say $T_n(f)$, dates back to Szeg\"o (see the book of Grenander and Szeg\"o \cite{GrSz58_01}).
However, it is much more difficult to deal with the spectrum of the product of two Toeplitz matrices
$T_n(f)T_n(g)$. Several authors have investigated the asymptotic behavior of the spectrum of 
$T_n(f)T_n(g)$. More precisely, assume that $f , g \in L^{\infty} ( \T , \R )$ with $ g \geq 0$ and denote by $\lambda_{0}^{n}, \ldots, \lambda_n^n$ the eigenvalues of the product $T_n(f)T_n(g)$. It was shown in \cite{BeGaRo97_01} and by Serra Capizzano \cite{Se01_01} that

\begin{equation}
\frac{1}{n}\sum_{k=0}^{n} \varphi ( \lambda_k^n ) \longrightarrow \frac{1}{2 \pi} \int_{\T} \varphi (f g) (x) \, d x ,
\end{equation}
for any $\varphi \in C^{0} (\R )$. There also exits a wide literature on the asymptotic behavior of the determinant of Toeplitz matrices (see Gray \cite{Gr06_01} for a review). However, to the best of our knowledge, no result is available on the asymptotic behavior of the maximum as well as on the minimum eigenvalues of the product $T_n(f)T_n(g)$.

The purpose of this paper is to prove that the eigenvalues of $T_n(f)T_n(g)$ converge to the spectrum of the limit operator $T(f)T(g)$. In particular, the maximum and the minimum eigenvalues of
$T_n(f)T_n(g)$ both converge to the maximum and minimum of the spectrum of $T(f)T(g)$. It allows us to deduce a direct application in probability on large deviations for quadratic forms of Gaussian processes.

Our approach is based on semi-classical analysis and scattering theory. In order to prove the convergence of the spectrum, we construct quasimodes which are approximative eigenvectors. More precisely, we show that eigenvectors associated with the eigenvalues outside the essential spectrum are localized in the lower and higher frequencies.

The paper is organized as follows. In Section \ref{a49}, we state the main results. Section \ref{a50} is devoted to the application in probability. Our functional point of view on Toeplitz operators and the tools used in the paper are given in Section \ref{a53}. The convergence of the spectrum is proved in Section \ref{a51}, while the rest of the proofs is postponed in Section \ref{a52}. Finally, in Section \ref{a41}, we propose an alternative proof of Douglas's theorem.

\section{Main results on Toeplitz operators}
\label{a49}

According to the identification of $L^2(\dT)$ with $l^2(\Z)$ (which contains $l^2(\dN)$) by the Fourier transform, we can view all the operators as operators on $L^2(\dT)$. In this case, $0$ is necessarily an eigenvalue of infinite multiplicity of each operators $T(f)$, $T_n(f)$ (see Section \ref{a32} for the precise definition of the Toeplitz operators). In this paper, our operators will be considered as operators on $\im \Pi$ and $\im \Pi_{n}$. We shall denote by $\sigma ( A)$ the spectrum of $A$. We have

\begin{lemma}\sl \label{a30}
Let $f ,g \in L^{\infty} (\T )$ with $g$ real-valued and non-negative. Then, on $\im \Pi$, we have
\begin{equation*}
\sigma \big( T (g)^{1/2} T (f) T (g)^{1/2} \big) = \sigma ( T (f) T (g) ) .
\end{equation*}
The same result holds for $T_{n}$ on $\im \Pi_{n}$ and the multiplicity of the eigenvalues is the same.
\end{lemma}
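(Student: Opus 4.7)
The plan is to use the classical Hilbert-space fact that for bounded operators $A,B$ and any $\lambda\ne 0$, $\lambda I - AB$ is invertible if and only if $\lambda I - BA$ is, via the identity
\[
(\lambda I - BA)^{-1} = \lambda^{-1}\bigl(I + B(\lambda I - AB)^{-1} A\bigr),
\]
which forces $\sigma(AB)\setminus\{0\} = \sigma(BA)\setminus\{0\}$. Since $g\ge 0$, $T(g)$ is a bounded non-negative self-adjoint operator on $\im\Pi$, so the square root $T(g)^{1/2}$ is a well-defined bounded operator on $\im\Pi$. Applying the above identity with $A := T(g)^{1/2}$ and $B := T(f)T(g)^{1/2}$ yields $AB = T(g)^{1/2}T(f)T(g)^{1/2}$ and $BA = T(f)T(g)$, so the two spectra in question agree off $0$.

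It remains to handle the single value $\lambda = 0$. If $T(g)$ is boundedly invertible on $\im\Pi$, then so is $T(g)^{1/2}$, and the similarity
\[
T(f)T(g) = T(g)^{-1/2}\bigl(T(g)^{1/2}T(f)T(g)^{1/2}\bigr)T(g)^{1/2}
\]
shows that the two full spectra coincide. Otherwise $0\in\sigma(T(g))$, and I split into two sub-cases. If $\ker T(g)\ne\{0\}$, then since $\ker T(g) = \ker T(g)^{1/2}$, any non-zero vector in this kernel is annihilated by both $T(f)T(g)$ and $T(g)^{1/2}T(f)T(g)^{1/2}$, so $0$ is an eigenvalue of both. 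If $T(g)$ is injective but not bounded below, a Weyl sequence $(v_{n})$ with $\|v_{n}\|=1$ and $T(g)v_{n}\to 0$ satisfies $T(g)^{1/2}v_{n}\to 0$ as well (by the spectral theorem applied to $T(g)^{1/2}$), which together with the boundedness of $T(f)$ shows that neither operator is bounded below, so $0$ lies in both spectra.

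For the statement about $T_{n}$ on the finite-dimensional space $\im\Pi_{n}$ the same arguments apply verbatim. For the multiplicity claim, I would note that $T_{n}(g)^{1/2}T_{n}(f)T_{n}(g)^{1/2}$ and $T_{n}(f)T_{n}(g)$ have the form $XY$ and $YX$ with $X:=T_{n}(g)^{1/2}$ and $Y:=T_{n}(f)T_{n}(g)^{1/2}$ square matrices of equal size, so the classical identity $\det(\lambda I - XY) = \det(\lambda I - YX)$ gives equal characteristic polynomials, hence equal algebraic multiplicities for every eigenvalue. The only mildly delicate point is the sub-case where $T(g)$ is injective but not invertible in the infinite-dimensional setting, which the Weyl sequence argument settles cleanly; otherwise the proof is essentially a packaging of two standard facts.
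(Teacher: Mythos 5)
Your argument is correct and follows essentially the same route as the paper: the ring identity giving $\sigma(AB)\setminus\{0\}=\sigma(BA)\setminus\{0\}$, a separate treatment of $\lambda=0$, and the determinant identity $\det(\lambda I-XY)=\det(\lambda I-YX)$ for the finite-dimensional multiplicity claim. The only cosmetic difference is that the paper handles $\lambda=0$ in two cases ($0\in\sigma(T(g)^{1/2})$ or not, using a single Weyl-sequence argument in the first case), whereas you split the non-invertible case further into kernel versus injective-but-not-bounded-below; the content is the same.
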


\begin{remark}\sl \label{a31}
This property is also true for $T$ and $T_{n}$ on $L^{2} (\T )$.
\end{remark}

Please note that $\sigma (A B) = \sigma (B A)$ is not true in general. By example, for $f(x) = e^{ix}$ and $g(x)= e^{-ix}$, we have $\sigma (T(f)T(g))=\{ 0,1 \}$ and $\sigma (T(g)T(f))=\{ 1 \}$ on $\im \Pi$.

Assume now that $f ,g \in L^{\infty} ( \T )$ are both real-valued with $g \geq 0$. Then, $T (g)^{1/2} T (f) T (g)^{1/2}$ and $T_{n} (g)^{1/2} T_{n} (f) T_{n} (g)^{1/2}$ are self-adjoint and their spectrum is real. Let us introduce the maximum and minimum eigenvalues of the product of the Toeplitz operators of finite rank on $\im \Pi_{n}$
\begin{align*}
\lambda_{\max}^{n} (f ,g) =& \max \sigma ( T_{n} (f) T_{n} (g) )  \\
\lambda_{\min}^{n} (f ,g) =& \min \sigma ( T_{n} (f) T_{n} (g) ) ,
\end{align*}
and the extrema of the spectrum of the product of the Toeplitz operators on $\im \Pi$
\begin{align*}
\lambda_{\max} (f ,g) =& \max \sigma ( T (f) T (g) )   \\
\lambda_{\min} (f ,g) =& \min \sigma ( T (f) T (g) ) .
\end{align*}
One can observe that, in general, we don't know if $\lambda_{\max} (f ,g)$ and $\lambda_{\min} (f ,g)$ are eigenvalues.

\begin{theorem}\sl \label{a29}
Assume that $f , g \in C^{0} (\T, \R )$ with $g \geq 0$. Then, as $n$ goes to infinity,
\begin{equation*}
\lambda_{\max}^{n} (f ,g) \longrightarrow \lambda_{\max} (f ,g) \quad \text{ and } \quad \lambda_{\min}^{n} (f ,g) \longrightarrow \lambda_{\min} (f ,g) .
\end{equation*}
\end{theorem}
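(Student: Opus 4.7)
By Lemma~\ref{a30}, the spectra of $T_n(f)T_n(g)$ and $T(f)T(g)$ coincide with those of the self-adjoint operators $A_n:=T_n(g)^{1/2}T_n(f)T_n(g)^{1/2}$ on $\im\Pi_n$ and $A:=T(g)^{1/2}T(f)T(g)^{1/2}$ on $\im\Pi$; hence it suffices to prove $\max\sigma(A_n)\to\max\sigma(A)$, the statement for $\lambda_{\min}$ following by applying this to $-f$. A preliminary step is that, viewing each $A_n$ on $\im\Pi$ by extension by zero, $A_n\to A$ in the strong operator topology with the uniform bound $\|A_n\|\le\|f\|_\infty\|g\|_\infty$. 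This combines $T_n(h)\to T(h)$ strongly for bounded $h$ (a direct consequence of $\Pi_n\to I$) with the strong continuity of the square-root map on uniformly bounded non-negative self-adjoint operators, applied to $T_n(g)\to T(g)$.

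The easy direction $\liminf_n\max\sigma(A_n)\ge\max\sigma(A)$ is immediate from the variational principle: for a unit vector $v\in\im\Pi$ with $\langle Av,v\rangle$ close to $\max\sigma(A)$, the projections $v_n:=\Pi_n v/\|\Pi_n v\|$ satisfy $\langle A_n v_n,v_n\rangle\to\langle Av,v\rangle$ by the strong convergence above combined with uniform boundedness.

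The hard direction $\limsup_n\max\sigma(A_n)\le\max\sigma(A)$ is proved by a weak-limit argument. Extract a subsequence along which $\lambda_n=\max\sigma(A_n)\to\lambda^*$, with associated unit eigenvectors $v_n\in\im\Pi_n$, and extract further so that $v_n\rightharpoonup v$ weakly in $\ell^2(\N)$. \emph{If $v\ne 0$}, testing $A_n v_n=\lambda_n v_n$ against any finitely supported $\varphi$ and passing to the limit (using $A_n\varphi\to A\varphi$ strongly and $v_n\rightharpoonup v$) yields $Av=\lambda^* v$, whence $\lambda^*\in\sigma(A)$ and $\lambda^*\le\max\sigma(A)$. \emph{If $v=0$}, the eigenvectors escape to high Fourier modes, and the plan is to show $\lambda^*\le\max(fg)\le\max\sigma(A)$. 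The latter inequality uses that $A-T(fg)$ is compact (a consequence of Widom's identity~\eqref{prodtoep}, the compactness of $H(f), H(g)$ via Hartman's theorem for continuous symbols, and of $[T(f),T(g)^{1/2}]$ via functional calculus), so $\max(fg)\in\sigma_{\mathrm{ess}}(A)\subset\sigma(A)$. The former, $\lambda^*\le\max(fg)$, is obtained from the finite-section identity~\eqref{prodtoepn} combined with an approximation of $f,g$ by trigonometric polynomials: on vectors supported far from both Fourier endpoints $0$ and $n$, the operators $T_n(f)$ and $T_n(g)^{1/2}$ act essentially as multiplication by $f$ and $\sqrt g$, so the quadratic form $\langle A_n v_n,v_n\rangle$ reduces, up to vanishing error, to the standard estimate $\int_\T fg\,|v_n|^2\,dx\le\max(fg)\|v_n\|^2$.

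The main obstacle lies in Case $v=0$. Two technical points make it delicate: the ``flipped'' projection $\widetilde\Pi_n$ appearing in Widom's identity does not tend strongly to zero, so its contribution can only be discarded after composition with a compact Hankel operator and only for subsequences along which $\widetilde\Pi_n v_n\rightharpoonup 0$; and $T_n(g)^{1/2}$ is not the compression $\Pi_n T(g)^{1/2}\Pi_n$, so one must first approximate $g$ by a trigonometric polynomial and use functional calculus to reduce the square root to a combination of Toeplitz products whose commutator errors are handled by the Hankel estimates above.
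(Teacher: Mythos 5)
Your proposal takes a genuinely different route from the paper: instead of semiclassical localization in the rescaled frequency variable $D_n = n^{-1}D$ (commutator estimates, cut-off functions $\varphi^{\pm}$, quasimodes), you use weak limits of eigenvectors combined with Widom's identity and Hartman's compactness theorem for Hankel operators. The "easy direction" ($\liminf \geq$) via the variational principle is clean and more elementary than the paper's appeal to Kato's generalized strong convergence (Lemma~\ref{lemmasp}). The Case $v\neq 0$ argument is also correct. However, Case $v=0$ as written has a genuine gap.

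The problem is that $v_n \rightharpoonup 0$ in $\ell^2(\N)$ is a statement in \emph{fixed} coordinates, not in the rescaled variable $D_n$: the mass of $v_n$ could concentrate around Fourier index $\sim \sqrt n$, or around $\sim n - \sqrt n$, in which cases your description ``supported far from both Fourier endpoints $0$ and $n$'' and the multiplication-operator heuristic fail. The correct mechanism in this case is, as you hint at the end, Widom's identity \eqref{prodtoepn} applied to the eigenvector equation for $T_n(f)T_n(g)$ (same spectrum as $A_n$ by Lemma~\ref{a30}): if $v_n \rightharpoonup 0$ \emph{and} $\widetilde\Pi_n v_n \rightharpoonup 0$, then both Hankel correction terms vanish strongly by compactness, giving $T_n(fg)v_n = \lambda_n v_n + o(1)$ and hence $\lambda^{*} \in [\inf(fg),\sup(fg)]$. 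But you never say what to do when $\widetilde\Pi_n v_n \rightharpoonup w \neq 0$. That sub-case is not exotic; it occurs precisely when the eigenvector concentrates near the endpoint $n$. You need the additional observation (used in the paper's Section~\ref{a17}) that $\widetilde\Pi_n T_n(h)\widetilde\Pi_n = T_n(Jh)$, so $\widetilde\Pi_n v_n$ is an eigenvector of $T_n(Jf)T_n(Jg)$ with the same eigenvalue $\lambda_n$; applying your Case $v\neq 0$ argument to that operator gives $\lambda^{*} \in \sigma(T(Jf)T(Jg))$, and one closes the loop with the lemma (present in the paper) that $\sigma(T(Jf)T(Jg)) = \sigma(T(g)T(f)) = \sigma(T(f)T(g))$. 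Without this sub-case the argument does not cover all sequences of eigenvectors. The remaining claims you make (compactness of $A - T(fg)$ via Hartman and functional calculus for $T(g)^{1/2}$, giving $\max(fg) \in \sigma_{\mathrm{ess}}(A)$) are in effect a proof of Douglas's Theorem~\ref{a24}, which the paper also establishes; these are correct in outline, though you should note that working directly with $T_n(f)T_n(g)$ rather than $A_n$ lets you avoid the square-root entirely in the finite-section Widom step.

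In summary: a valid alternative strategy, with the ``easy'' direction simpler than the paper's, but the ``hard'' direction requires a three-way case split — $v\neq 0$, $\widetilde\Pi_n v_n \rightharpoonup w\neq 0$, and both weak limits zero — and your write-up currently only treats the first and third, while the phrasing for the third conflates fixed-coordinate escape with escape to mid-frequencies relative to $n$.
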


In fact, we will prove the following result which implies Theorem \ref{a29}.

\begin{theorem}\sl \label{a8}
Assume that $f , g \in C^{0} (\T,\R )$ with $g \geq 0$. For $\lambda \in \R$, the following properties are equivalent:

{\it i)} $\lambda \in \sigma ( T (f) T (g) )$,

{\it ii)} there exists $\lambda_{n} \in \sigma ( T_{n} (f) T_{n} (g) )$ such that $\lambda_{n} \to \lambda$,

{\it iii)} there exists a subsequence $N$ of $\N$ and $\lambda_{N} \in \sigma ( T_{N} (f) T_{N} (g) )$ such that $\lambda_{N} \to \lambda$.
\end{theorem}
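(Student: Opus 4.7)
The plan is to pass through the self-adjoint conjugates furnished by Lemma~\ref{a30}: set $A := T(g)^{1/2} T(f) T(g)^{1/2}$ on $\im \Pi$ and $A_n := T_n(g)^{1/2} T_n(f) T_n(g)^{1/2}$ on $\im \Pi_n$. These are bounded self-adjoint operators with the same spectra as $T(f)T(g)$ and $T_n(f)T_n(g)$ respectively. Viewing $A_n$ as an operator on $\im \Pi$ extended by zero on $\im \Pi_n^{\perp}$, the strong convergences $T_n(h) \Pi_n \to T(h)$ for bounded $h$, together with strong-operator continuity of the square root on uniformly bounded non-negative operators, give $A_n \to A$ in the strong operator topology. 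Self-adjointness of $A_n$ then provides the Weyl identity $\operatorname{dist}(\lambda, \sigma(A_n)) = \|(A_n - \lambda)^{-1}\|^{-1}$, which will be invoked throughout. The implication (ii) $\Rightarrow$ (iii) is trivial.

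For (i) $\Rightarrow$ (ii), fix a normalized Weyl sequence $(v_k) \subset \im \Pi$ with $(A - \lambda) v_k \to 0$. For each fixed $k$, $\Pi_n v_k \to v_k$ and $A_n \Pi_n v_k \to A v_k$ in norm as $n \to \infty$, so the renormalized truncations $w_n^{(k)} := \Pi_n v_k / \|\Pi_n v_k\| \in \im \Pi_n$ satisfy $\limsup_n \|(A_n - \lambda) w_n^{(k)}\| \leq \|(A - \lambda) v_k\|$. A diagonal extraction produces $w_n \in \im \Pi_n$ with $(A_n - \lambda) w_n \to 0$; the Weyl identity then yields $\operatorname{dist}(\lambda, \sigma(A_n)) \to 0$, and one picks any $\lambda_n \in \sigma(A_n)$ attaining this distance.

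The substantive direction is (iii) $\Rightarrow$ (i). Let $u_N \in \im \Pi_N$ be unit eigenvectors of $A_N$ with $A_N u_N = \lambda_N u_N$ and $\lambda_N \to \lambda$ along a subsequence; the task is to produce a Weyl sequence for $A$ at $\lambda$. The obstacle is that the Fourier support of $u_N$ may drift into high modes near $N$, in which case weak limits of $u_N$ in $\im \Pi$ vanish. I counter this with the reflection unitary $J_N \colon e_k \mapsto e_{N-k}$ on $\im \Pi_N$, which satisfies $J_N T_N(h) J_N = T_N(Jh)$ (where $Jh(x) = h(-x)$) and hence conjugates $A_N$ to $\widetilde A_N := T_N(Jg)^{1/2} T_N(Jf) T_N(Jg)^{1/2}$. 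Thus $J_N u_N$ is a unit eigenvector of $\widetilde A_N$ with eigenvalue $\lambda_N$, concentrated at low frequencies precisely when $u_N$ is concentrated at high ones. Using $T(Jf) T(Jg) = (T(g) T(f))^{\ast}$, Lemma~\ref{a30}, and the general identity $\sigma(AB) \cup \{0\} = \sigma(BA) \cup \{0\}$, the spectra of $\widetilde A$ and $A$ agree up to possibly the point $0$ (which is handled by a direct check).

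Fixing a mesoscopic cutoff $M = M(N)$ with $M \to \infty$ and $M/N \to 0$, decompose $u_N = u_N^{\flat} + u_N^{\mathrm{bulk}} + u_N^{\sharp}$ into Fourier projections on $[0,M]$, $(M, N-M]$, and $(N-M, N]$. If $\|u_N^{\flat}\|$ stays bounded below along a subsequence, weak compactness in $\im \Pi$ combined with the strong convergence $A_n \Pi_n \to A$ yields a nonzero weak limit $u^{\ast}$ with $A u^{\ast} = \lambda u^{\ast}$, giving $\lambda \in \sigma(A)$; the symmetric case $\|u_N^{\sharp}\|$ bounded below reduces to this one after conjugation by $J_N$. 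The main obstacle is the bulk case $\|u_N^{\mathrm{bulk}}\| \to 1$: here I would invoke the semiclassical / microlocal analysis promised in the introduction (with $h = 1/N$) to show that $u_N^{\mathrm{bulk}}$ is microlocalized on the level set $\{fg = \lambda\}$, forcing $\lambda \in (fg)(\T)$; via the identity \eqref{prodtoep} and compactness of the Hankel operators $H(f)$ and $H(Jg)$ (valid because $f, g$ are continuous), this range is contained in the essential spectrum of $A$, hence in $\sigma(A)$.
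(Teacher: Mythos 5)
Your implication (i)$\Rightarrow$(ii) is sound: using the strong convergence $A_n\Pi_n\to A$ of the self-adjoint conjugates and an explicit Weyl-sequence diagonal argument is an acceptable alternative to the paper's appeal to Kato's theory of generalized strong convergence (Theorem VIII.1.14 of \cite{Ka95_01}); the renormalization $\Pi_nv_k/\|\Pi_nv_k\|$ is harmless since $\|\Pi_nv_k\|\to 1$. Your overall architecture for (iii)$\Rightarrow$(i) --- split $u_N$ into low, middle, and high Fourier modes and fold the high-frequency case onto the low one via reflection --- matches the paper. However, two of your three sub-cases have genuine gaps.

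The critical gap is in the low-frequency case. You claim that $\|u_N^{\flat}\|\geq c>0$ together with weak compactness ``yields a \emph{nonzero} weak limit $u^{*}$ with $Au^{*}=\lambda u^{*}$.'' This is false: a sequence of unit vectors supported on modes $[0,M(N)]$ with $M(N)\to\infty$ can perfectly well have weak limit $0$ (for instance $u_N^{\flat}=e_{\lfloor\sqrt{M(N)}\rfloor}$). The passage-to-the-limit argument $\langle A_Nu_N,w\rangle\to\langle Au^{*},w\rangle$ does give $Au^{*}=\lambda u^{*}$, but if $u^{*}=0$ you have learned nothing. There is no obvious patch of your argument that rules this out. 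The paper circumvents this entirely: it never takes weak limits. Instead it replaces $f,g$ by trigonometric polynomials $f_k,g_k$ with $\supp\widehat{f_k},\supp\widehat{g_k}\subset[-k,k]$, uses a cutoff at a \emph{fixed proportion} $\varepsilon N$ (not your mesoscopic $M(N)$), and exploits the resulting finite Fourier supports to prove the \emph{exact} identity $T(f_k)T(g_k)u_N^{-}=T_N(f_k)T_N(g_k)u_N^{-}$ for $4k\leq N$ (Lemma \ref{a5}). Combined with the quasimode estimate (Lemma \ref{a15}), this makes $u_N^{-}$ a quasimode of the infinite operator $T(f)T(g)$ directly, and the lower bound $\|u_N^{-}\|\geq 1/3$ then forces $\lambda\in\sigma(T(f)T(g))$ by contradiction --- no compactness is needed.

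The bulk case is hand-waved: ``invoke the semiclassical / microlocal analysis'' is not a proof that $u_N^{\mathrm{bulk}}$ concentrates on $\{fg=\lambda\}$. The paper handles the middle region concretely and does so \emph{before} any case analysis: it first uses Douglas's theorem to reduce to $\lambda\notin[\inf(fg),\sup(fg)]$, and then Lemma \ref{lemmaloc} --- a short commutator argument via Lemma \ref{a26} --- shows $\varphi(D_N)u_N\to 0$ for $\varphi\in C_0^{\infty}(]0,1[)$, so with the fixed-proportion cutoff the middle piece automatically vanishes. Because you chose $M(N)/N\to 0$, your bulk region $(M,N-M]$ is strictly larger than the paper's, and the localization lemma does not kill it; the microlocal claim you would need is genuinely stronger and would itself require the Douglas reduction you omitted.

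Finally, a smaller slip: $T(Jf)T(Jg)$ is not $\bigl(T(g)T(f)\bigr)^{*}$ (for real symbols the adjoint of $T(g)T(f)$ is $T(f)T(g)$). The correct relation, used in the paper's lemma, is via the \emph{transpose}: $T(Jh)=T(h)^{t}$, so $T(Jf)T(Jg)=\bigl(T(g)T(f)\bigr)^{t}$, and $J\bigl(T(Jf)T(Jg)\bigr)^{t}J^{-1}=T(g)T(f)$. Your spectral conclusion $\sigma(\widetilde{A})\cup\{0\}=\sigma(A)\cup\{0\}$ is still correct, but not for the reason you gave.
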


\begin{remark}\sl
In the previous theorems, the operators $T_{n}$ and $T$ are considered on $\im \Pi_{n}$ and on $\im \Pi$ respectively. But, our results are also true if these operators are considered on $L^{2} (\T )$ (we only add the eigenvalue $0$).
\end{remark}

As shown in the next example, it is not true in general that $\lambda_{\max} (f,g) = \sup (f g)$ or $\lambda_{\min} (f,g) = \inf (f g)$. Note also that the norm of $T (g)^{1/2} T (f) T(g)^{1/2}$ is not always equal to $\Vert f g \Vert_{\infty}$ or $\Vert f \Vert_{\infty} \Vert g \Vert_{\infty}$. The situation is totally different from the case of a single Toeplitz operator $T (f)$ with real continuous symbol as $\lambda_{\max} (f,1) = \sup (f)$ and $\lambda_{\min} (f ,1) = \inf (f)$.

\begin{example}\sl \label{a40}
Let $a, \theta \in \dR$ with $|\theta|<1$ and consider $f,g \in C^{0} (\T)$
given by 
\begin{equation*}
f(x)=a+\cos(x) \hspace{0.5cm}\text{and}\hspace{0.5cm}g(x)=\frac{1}{1+\theta^2-2\theta \cos(x)}.
\end{equation*}
We have $\Vert f \Vert_{\infty} = \vert a \vert +1$ and $\Vert g \Vert_{\infty} = ( 1 - \vert \theta \vert )^{-2}$.
The function $g$ is simply the spectral density of a Gaussian autoregressive process \cite{BeGaRo97_01}.
If $\theta =0$, $g=1$ and the product $T(f)T(g)$ reduces to $T(f)$.
Consequently, $\lambda_{\max} (f ,g)=a+1$ and  $\lambda_{\min} (f ,g)=a-1$.
If $\theta \neq 0$, denote
\begin{equation*}
a_{\theta}=-\frac{(1+\theta)}{2\theta}
\hspace{1cm}\text{and}\hspace{1cm}
b_{\theta}=-\frac{(1-\theta)}{2\theta}.
\end{equation*}
We shall show in section \ref{a38} that $\lambda_{\min} (f ,g)< \inf(fg)$
and $\lambda_{\max} (f ,g)> \sup(fg)$. More precisely, if $\theta >0$ then
$\lambda_{\max} (f ,g)=\sup(fg)$ while
\begin{equation*}
\lambda_{\min} (f ,g)=\frac{1}{-4\theta(1+a\theta)}<\inf ( f g) = \min \Big( \frac{a-1}{(1+ \theta )^{2}} , \frac{a+1}{(1- \theta )^{2}} \Big)
\end{equation*}
if $a\in ]a_{\theta},b_{\theta}[$ 
and $\lambda_{\min} (f ,g)= \inf(fg)$ otherwise. Moreover, if $\theta <0$ then
$\lambda_{\min} (f ,g)=\inf(fg)$ while
\begin{equation*}
\lambda_{\max} (f ,g)=\frac{1}{-4\theta(1+a\theta)} > \sup (f g) = \max \Big( \frac{a-1}{(1+ \theta )^{2}} , \frac{a+1}{(1- \theta )^{2}} \Big)
\end{equation*}
if $a \in ]a_{\theta},b_{\theta}[$ 
and $\lambda_{\max} (f ,g)= \sup(fg)$ otherwise. 
\end{example}

\section{Application in probability}
\label{a50}

Let $(X_{n})$ be a centered stationary real Gaussian process with bounded positive spectral density 
$g$ which means that
\begin{equation*}
\dE[X_{j}X_{k}]=\frac{1}{2\pi}\int_{\T } \exp(i(j-k)x)g(x) \, d x.
\end{equation*}
We assume in all the sequel that $g$ is not the zero function. For any real continuous function $f \in C^{0}(\T )$, we are interested in the asymptotic behavior of
\begin{equation}
\label{defperiodo}
\cW_n(f)=\frac{1}{2\pi n}\int_{\dT} f(x) \bigg\vert \sum_{j=0}^{n}
X_{j}\exp(ijx)\bigg\vert^{2} dx.
\end{equation}
The purpose of this section is to provide the last step in the analysis of the large deviation
properties of $(\cW_n(f))$ by establishing a large deviation principle (LDP) for 
$(\cW_n(f))$ in the spirit of the original work of \cite{BeGaRo97_01} or of Bryc and Dembo \cite{BrDe97_01}. We refer the reader to the book of Dembo and Zeitouni \cite{DeZe98_01} for the general theory on large deviations. The covariance matrix associated with the vector $X^{(n)}=(X_{0},\ldots,X_{n})^t$
is $T_{n} (g)$. Consequently, it immediately follows from
(\ref{defperiodo}) that
\begin{equation}
\label{decomperio}
\cW_n(f) =\frac{1}{n} X^{(n)t}T_{n} (f)X^{(n)}=\frac{1}{n} Y^{(n)t}T_{n} (g)^{1/2} T_{n} (f) T_{n} (g)^{1/2}Y^{(n)}
\end{equation}
where the vector $Y^{(n)}$ has a Gaussian $\cN(0,I_n)$ distribution. In order to investigate the large deviation 
properties of $(\cW_n(f))$, it is necessary to calculate the normalized cumulant generating function
given, for all $t\in \dR$, by
$$ L_{n}(t) = \frac{1}{n}\log \dE \big[ \exp(n t \cW_n(f)) \big].$$
For convenience and in all the sequel, we use of the notation
that $\log t=-\infty$ if $t\leq 0$. We deduce from (\ref{decomperio}) and 
standard Gaussian calculation that
for all $t\in \dR$
\begin{align*}
L_n(t) =& -\frac{1}{2n}\log \det \big( I_n-2t T_{n} (g)^{1/2} T_{n} (f)T_{n} (g)^{1/2} \big) ,  \\
=& -\frac{1}{2n}\sum_{k=0}^n \log(1 -2 t\lambda_k^{n})
\end{align*}
where $\lambda_0^n, \ldots, \lambda_n^n$  
are the eigenvalues of $T_{n} (g)^{1/2} T_{n} (f) T_{n} (g)^{1/2}$ which are also
the eigenvalues of $T_{n} (f) T_{n} (g)$ from Lemma \ref{a30}. For all $t\in \dR$, let
\begin{equation*}
L_{f g}(t)=-\frac{1}{4\pi}\int_{\T} \log (1-2tf(x)g(x)) \, d x ,
\end{equation*}
and denote by $I_{fg}$ its Fenchel-Legendre transform
\begin{equation*}
I_{fg}(x)= \sup_{t\in \dR}\{xt-L_{fg}(t)\}.
\end{equation*}
Furthermore, for all $x\in \dR$, let
\begin{equation}
\label{defJfg}
J_{f g}(x)= \left \{ 
\begin{aligned}
&I_{f g}(a)+\frac{1}{2{\lambda_{\min}(f,g)}}(x-a) \quad &&\text{if } x \in ]-\infty,a] \\
&I_{f g}(x) &&\text{if } x \in ]a,b[   \\
&I_{f g}(b)+\frac{1}{2{\lambda_{\max}(f,g)}}(x-b) &&\text{if } x \in [b,+\infty[ 
\end{aligned}  \right.
\end{equation}
where $a$ and $b$ are the extended real numbers given by
\begin{equation*}
a= L_{fg}^{\prime}\left( \frac{1}{2{\lambda_{\min}(f,g)}}\right)
\end{equation*}
if $\lambda_{\min}(f,g)<0$ and $\lambda_{\min}(f,g)<\inf(fg)$,
$a=-\infty$ otherwise, while
\begin{equation*}
b= L_{fg}^{\prime}\left( \frac{1}{2{\lambda_{\max}(f,g)}}\right)
\end{equation*}
if $\lambda_{\max}(f,g)>0$ and $\lambda_{\max}(f,g)>\sup(f g)$,
$b=+\infty$ otherwise. We immediately deduce from Theorem 1 of \cite{BeGaRo97_01} together with Theorem \ref{a29}, that the LDP holds for $(\cW_n(f))$.

\begin{corollary}\sl \label{corproba}
The sequence $(\cW_n(f))$ satisfies an LDP with good rate function
$J_{fg}$. More precisely, for any closed set $F  \subset \dR$
$$
\limsup_{n\rightarrow\infty}\frac{1}{n}\log \dP(\cW_n(f)\in F)\leq
-\inf_{x\in F}J_{fg}(x),
$$
while for any open set $G \subset \dR$
$$
\liminf_{n\rightarrow\infty}\frac{1}{n}\log \dP(\cW_n(f)\in G)\geq 
-\inf_{x\in G}J_{fg}(x).
$$
\end{corollary}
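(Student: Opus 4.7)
The plan is to invoke Theorem~1 of \cite{BeGaRo97_01}, which establishes an LDP for quadratic forms of stationary Gaussian processes of exactly the shape $\cW_n(f)$ under two hypotheses: (a) a Szeg\H{o}-type convergence of the normalized log-cumulant generating function on the interior of its limiting domain, and (b) convergence of the extreme eigenvalues of the relevant Toeplitz product to explicit limits that enter the rate function as slopes of affine tails. The present corollary is, in essence, the verification that both hypotheses are available here.

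For (a), I would use the representation $L_n(t)=-\tfrac{1}{2n}\sum_{k=0}^n\log(1-2t\lambda_k^n)$ already recorded in Section~\ref{a50}, together with the Szeg\H{o}-type result of Serra Capizzano \cite{Se01_01} (recalled in the introduction of the paper) applied to the continuous test function $\varphi(\lambda)=\log(1-2t\lambda)$, to conclude that $L_n(t)\to L_{fg}(t)$ for every $t$ in the interior of the set $\{1-2tf(x)g(x)>0\text{ on }\T\}$; the application of Serra Capizzano's theorem is legal precisely on this interior because there $\varphi$ is continuous on a neighbourhood of the range of $fg$. For (b), I would invoke Theorem~\ref{a29} to obtain $\lambda_{\max}^n(f,g)\to\lambda_{\max}(f,g)$ and $\lambda_{\min}^n(f,g)\to\lambda_{\min}(f,g)$. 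These are exactly the two spectral limits that appear as slopes in the definition \eqref{defJfg} of the rate function $J_{fg}$, and together with the behavior of $L_n$ on its effective domain they identify the limiting Legendre transform on $]a,b[$ with $I_{fg}$.

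The real content — and the reason this corollary requires the whole paper — is that $\lambda_{\max}(f,g)$ and $\lambda_{\min}(f,g)$ need not coincide with $\sup(fg)$ and $\inf(fg)$, as Example~\ref{a40} demonstrates. Consequently $L_{fg}$ fails to be essentially smooth at the endpoints $\tfrac{1}{2\sup(fg)}$ and $\tfrac{1}{2\inf(fg)}$, and a naive application of Gärtner-Ellis on the domain of $L_{fg}$ would produce the wrong rate function outside $]a,b[$. The framework of \cite{BeGaRo97_01} resolves this non-steepness by extending the rate function affinely beyond $a$ and $b$, with slopes given precisely by $\tfrac{1}{2\lambda_{\min}(f,g)}$ and $\tfrac{1}{2\lambda_{\max}(f,g)}$; the main obstacle in the proof is exactly the justification of these affine tails, which reduces to the convergence of the extreme eigenvalues supplied by Theorem~\ref{a29}. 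Once these two ingredients are in place, the LDP with good rate function $J_{fg}$ and the two announced inequalities follow directly from the general result of \cite{BeGaRo97_01}.
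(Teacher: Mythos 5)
Your proposal is correct and takes the same route as the paper: the paper's proof is literally the one-line citation of Theorem~1 of \cite{BeGaRo97_01} together with Theorem~\ref{a29}, and your expanded explanation of why these two ingredients suffice --- in particular, that the affine tails of $J_{fg}$ beyond $a$ and $b$ are governed by the extreme-eigenvalue limits that Theorem~\ref{a29} supplies, which is exactly the hypothesis left open by the Gärtner--Ellis non-steepness obstruction in \cite{BeGaRo97_01} --- is an accurate reading of that citation.
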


\begin{remark}\sl
Denote by $\mu$ the derivative of $L_{fg}$ at point zero
$$ \mu=\frac{1}{2\pi}\int_{\dT} f(x)g(x)dx. $$
Then, we have $J_{fg}(\mu)=0$ and it follows from Corollary \ref{corproba}
that for all $x>\mu$
\begin{equation*}
\lim_{n\rightarrow\infty}\frac{1}{n}\log \dP(\cW_n(f)\geq x)=-J_{fg}(x) ,
\end{equation*}
whereas for all $x<\mu$
\begin{equation*}
\lim_{n\rightarrow\infty}\frac{1}{n}\log \dP(\cW_n(f)\leq x)=-J_{f g} (x).
\end{equation*}
\end{remark}

\section{Toeplitz operators and functional calculus}
\label{a53}

In this section, we interpret the projection operators $\Pi_{n}$ and $\Pi$ as spectral projectors of the derivation operator. We also introduce the main ingredients of the proofs.

\Subsection{A functional point of view}
\label{a32}

Our approach consists to view the Toeplitz operators $T(f)$ and $T_{n} (f)$ as the cut-off, in frequencies, of the operator of multiplication by $f$.

To be more precise, let us introduce the Fourier transform, $\mathcal{F}: L^2(\dT) \to l^2(\Z)$, defined by
\begin{equation*}
(\mathcal{F}u)_k = \widehat{u}_k = \frac{1}{2 \pi} \int_{- \pi}^{\pi} u (x) e^{-i k x} d x .
\end{equation*}
The operator $\mathcal{F}$ is an isomorphism. We denote by $\mathcal{F}^{-1}$ its inverse, and we introduce the projections $\wh{\Pi}$ and $\wh{\Pi}_n$ as
\begin{align*}
\widehat{\Pi}:& \widehat{u} \in l^2(\Z) \longmapsto (\ldots, 0, 0, \widehat{u}_0, \widehat{u}_1, \ldots ) \in l^2(\Z) \\
\widehat{\Pi}_n:& \widehat{u} \in l^2(\Z) \longmapsto (\ldots, 0, 0, \widehat{u}_0, \widehat{u}_1, \ldots , \widehat{u}_n,0, 0,  \ldots) \in l^2(\Z ) .
\end{align*}

On the other hand, if we identify $f \in L^{\infty}(\T )$ with $L(f)$, the bounded operator defined on $L^{2} ( \T )$ by
\begin{equation*}
u \in L^2(\T ) \longmapsto f u \in L^2(\T ) ,
\end{equation*}
we have
\begin{equation*}
T(f)=\Pi \, f \,\Pi  \quad \text{ and } \quad T_n(f)= \Pi_n \, f \,\Pi_n,
\end{equation*}
with $\Pi=\mathcal{F}^{-1}\wh{\Pi} \mathcal{F} $ and $\Pi_n=\mathcal{F}^{-1}\wh{\Pi}_n \mathcal{F}$. In the following, we will systematically identify $f$ with the operator $L (f)$.

On the other hand, since $\frac{1}{i} \frac{d}{dx}(e^{ikx})= k e^{ikx}$, the derivation operator 
$D$ defined on 
\begin{equation*}
 H^1(\dT) = \{ u \in L^2(\dT); \;  \frac{d}{dx}u \in  L^2(\dT)\} = \{ u \in L^2(\dT); \; (k \hat{u}_k)_k \in l^2(\Z)\}
\end{equation*}
by 
\begin{equation*}
 D: u \in H^1(\dT) \longmapsto \frac{1}{i} \frac{d}{dx} u \in L^2(\dT)
\end{equation*}
is self-adjoint on $L^2(\T )$ and $\mathcal{F} D \mathcal{F}^{-1}$ is the diagonal operator $(k\delta_{k,j})_{k,j \in \Z}$.

For any bounded Borel function $\varphi$, the bounded operator $\varphi (D)$ is defined with the help of the spectral theorem for self-adjoint operators (see Theorem VIII.5 of \cite{ReSi80_01}). It satisfies
\begin{equation*}
\varphi(D)=\mathcal{F}^{-1} \varphi(k) \mathcal{F} ,
\end{equation*}
where $\varphi(k)$ is identified with the operator $L ( \varphi )$
\begin{equation*}
\widehat{u} \in l^{2} (\Z ) \longmapsto (\ldots , \varphi(k) \widehat{u}_{k} , \ldots ) \in l^{2} (\Z ) .
\end{equation*}
In particular, if ${\bf 1}_{I}$ denotes the indicator function of the interval $I$, we have
\begin{equation*}
{\bf 1}_{[0, + \infty[}(D) = \Pi  \qquad \text{ and } \qquad {\bf 1}_{[0, n]}(D) ={\bf 1}_{[0, 1]}(n^{-1}\, D)= \Pi_n.
\end{equation*}
Moreover, note that if supp$(\varphi) \subset [a,b]$, and if we identify $e^{ikx}$ with the operator of multiplication by $e^{ikx}$, we have the trivial properties
\begin{equation*}
{\bf 1}_{[a,b]}(D) \, \varphi(D)=\varphi(D) \qquad \text{ and } \qquad {\bf 1}_{[a,b]}(D)\, e^{ikx} = e^{ikx}\, {\bf 1}_{[a-k,b-k]}(D) .
\end{equation*}

In the rest of the paper, a function is a $o_{a \to b}^{c} (1)$ if, for each $c$ fixed, the function goes to $0$ as $a$ goes to $b$. In the same way, a function is a $\CO^{c} (1)$ if, for each $c$ fixed, the function is a $\CO (1)$.

\Subsection{A commutator estimate}

In this subsection, we recall a standard result of the functional analysis. For $\rho \in\R$, we denote by $S^{\rho} ( \R )$ the class of functions $\varphi$ in $C^{\infty} (\R)$ such that
\begin{equation*}
\vert \partial_{s}^{k} \varphi (s) \vert \leq C_{k} \< s \>^{\rho -k},
\end{equation*}
for $k \geq 0$. Here $\< x \> = (1 + \vert  x \vert^{2})^{1/2}$.

\begin{lemma}[Lemma C.3.2 of \cite{DeGe97_01}]\sl \label{a25}
Let $A,B$ be self-adjoint operators on a Hilbert space with $B$ and $[A ,B]$ bounded. If $\varphi \in S^{\rho} (\R)$ with $\rho <1$, then
\begin{equation*}
\Vert [ \varphi (A) , B ] \Vert \leq C_{\varphi} \Vert [ A, B] \Vert .
\end{equation*}
Here, $[A,B]=A B - B A$ denotes the commutator. The constant $C_{\varphi}$ only depends on $\varphi$.
\end{lemma}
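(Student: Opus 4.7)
The plan is to represent $\varphi(A)$ via the Helffer--Sj\"ostrand formula and reduce the commutator $[\varphi(A), B]$ to a weighted integral of resolvent commutators, which, by self-adjointness of $A$ and boundedness of $[A,B]$, are controlled by $\|[A,B]\|$ together with the resolvent bound $\|(z-A)^{-1}\| \leq |\im z|^{-1}$.

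First, since $\varphi \in S^{\rho}(\R)$ with $\rho < 1$, I would fix an almost analytic extension $\widetilde{\varphi} \colon \C \to \C$ such that $\widetilde{\varphi}|_{\R} = \varphi$, $\supp \widetilde{\varphi} \subset \{ |\im z| \leq C \langle \re z \rangle \}$, and for every $N \geq 0$,
\begin{equation*}
\big| \bar\partial \widetilde{\varphi}(z) \big| \leq C_{N} \langle \re z \rangle^{\rho - 1 - N} |\im z|^{N} .
\end{equation*}
Such $\widetilde{\varphi}$ is obtained by truncating the formal Taylor expansion of $\varphi$ in $\im z$ against a suitable cut-off. The Helffer--Sj\"ostrand formula then gives
\begin{equation*}
\varphi(A) = - \frac{1}{\pi} \int_{\C} \bar\partial \widetilde{\varphi}(z) \, (z - A)^{-1} \, dL(z),
\end{equation*}
where $dL$ denotes Lebesgue measure on $\C$ and the integral converges in operator norm.

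Next, I would commute $B$ through the resolvent under the integral. A direct algebraic manipulation, valid because $B$ and $[A,B]$ extend to bounded operators, gives $[(z-A)^{-1}, B] = (z-A)^{-1} [A,B] (z-A)^{-1}$. Hence
\begin{equation*}
[ \varphi(A), B ] = - \frac{1}{\pi} \int_{\C} \bar\partial \widetilde{\varphi}(z) \, (z-A)^{-1} [A,B] (z-A)^{-1} \, dL(z),
\end{equation*}
so that, taking norms and using $\|(z-A)^{-1}\| \leq |\im z|^{-1}$,
\begin{equation*}
\| [\varphi(A), B] \| \leq \frac{\|[A,B]\|}{\pi} \int_{\C} \big| \bar\partial \widetilde{\varphi}(z) \big| \, |\im z|^{-2} \, dL(z) .
\end{equation*}

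The final step is to check that the remaining integral is a finite constant $C_{\varphi}$ depending only on $\varphi$. Taking $N = 2$ in the estimate on $\bar\partial \widetilde{\varphi}$ and integrating $y = \im z$ over the strip $|y| \leq C \langle \re z \rangle$ produces a factor of order $\langle \re z \rangle$, reducing the problem to the bound $\int_{\R} \langle x \rangle^{\rho - 2} \, dx < \infty$, which holds precisely because $\rho < 1$. The main obstacle is not the norm estimate itself but rather the analytic technicalities behind the Helffer--Sj\"ostrand representation: constructing an almost analytic extension with the stated weighted $\bar\partial$-bounds, and verifying that the contour integral converges in operator norm and agrees with the spectral-theoretic definition of $\varphi(A)$. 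Once these classical facts are in place, the commutator bound follows essentially by inspection.
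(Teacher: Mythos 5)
The paper does not prove this lemma: it quotes it verbatim from Dereziński--Gérard (Lemma C.3.2) and cites that reference, so there is no internal proof to compare against. That said, the Helffer--Sj\"ostrand representation of the commutator together with the resolvent bound $\Vert (z-A)^{-1}\Vert \le |\im z|^{-1}$ is exactly the mechanism used in that reference, so your plan is the right one and not a different route.

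Your core computation is sound: the algebraic identity $[(z-A)^{-1},B]=(z-A)^{-1}[A,B](z-A)^{-1}$ is correct, and with $N=2$ the bound
$|\bar\partial\widetilde{\varphi}(z)|\,|\im z|^{-2}\le C_2\langle\re z\rangle^{\rho-3}$
on the conical strip integrates to $\int_\R\langle x\rangle^{\rho-2}\,dx<\infty$ precisely when $\rho<1$. The one genuine gap is your assertion that the Helffer--Sj\"ostrand integral for $\varphi(A)$ itself ``converges in operator norm.'' For $\rho\in[0,1)$ the function $\varphi$ may be unbounded, $\varphi(A)$ is then an unbounded (normal) operator, and the representation $-\frac1\pi\int\bar\partial\widetilde\varphi\,(z-A)^{-1}\,dL(z)$ does \emph{not} converge absolutely in norm: with the optimal choice $N=1$ one is left with $\int_\R\langle x\rangle^{\rho-1}\,dx$, which diverges once $\rho\ge 0$. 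Only the \emph{commutator} integral, where the extra resolvent improves the weight by one power, converges in the full range $\rho<1$. To close the gap you should not pass from the scalar formula to the commutator by ``commuting $B$ through the integral''; instead regularize, e.g.\ set $\varphi_R(s)=\varphi(s)\chi(s/R)$ with $\chi\in C_0^\infty$ equal to $1$ near $0$, apply Helffer--Sj\"ostrand to the compactly supported $\varphi_R$, obtain a bound on $\Vert[\varphi_R(A),B]\Vert$ uniform in $R$ by observing that an almost analytic extension of $\varphi_R$ can be taken with $\bar\partial$-bounds uniform in $R$, and then let $R\to\infty$ using a weak or strong limit. Alternatively, one can establish the commutator formula directly as an identity of quadratic forms on a suitable dense domain. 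Either repair is routine, but as written the middle step of your argument is false for $\rho\ge 0$ and needs this patch.
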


Applying this lemma, we immediately obtain

\begin{lemma}\sl \label{a26}
Let $f \in C^{0} ( \T )$ and $\varphi \in S^{\rho} (\R)$ with $\rho \leq 0$. Then
\begin{equation*}
[ \varphi (\varepsilon D ) , f ] = o_{\varepsilon \to 0} (1).
\end{equation*}
\end{lemma}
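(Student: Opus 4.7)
The plan is to reduce to the smooth case by density and invoke Lemma \ref{a25}. Since $f$ is only continuous, the commutator $[\varepsilon D, f]$ is typically unbounded as an operator on $L^2(\T)$, so Lemma \ref{a25} cannot be applied directly to $f$. For smooth periodic $f$, however, $[D,f]$ is just the bounded multiplication operator by $\frac{1}{i}f'$, and the lemma becomes directly applicable.

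First I would note that $\varphi \in S^\rho(\R)$ with $\rho \leq 0$ forces $\varphi$ to be bounded on $\R$, so by the spectral theorem
$$\| \varphi (\varepsilon D) \| = \sup_{k \in \Z} \vert \varphi ( \varepsilon k ) \vert \leq \| \varphi \|_{\infty},$$
uniformly in $\varepsilon > 0$. Next, I would approximate $f$ by smooth periodic functions $f_{\delta} \in C^{\infty} (\T )$ (obtained, say, by convolution with a $2\pi$-periodic mollifier) satisfying $\| f - f_{\delta} \|_{\infty} \to 0$ as $\delta \to 0$. Splitting
$$[ \varphi ( \varepsilon D) , f ] = [ \varphi ( \varepsilon D) , f_{\delta} ] + [ \varphi ( \varepsilon D) , f - f_{\delta} ],$$
the second term is bounded in norm by $2 \| \varphi ( \varepsilon D) \| \| f - f_{\delta} \|_{\infty} \leq 2 \| \varphi \|_{\infty} \| f - f_{\delta} \|_{\infty}$, which is small uniformly in $\varepsilon$ provided $\delta$ is chosen small enough.

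For the smooth part, Lemma \ref{a25} applied to the self-adjoint operator $A = \varepsilon D$ and the bounded operator $B = f_{\delta}$ (noting that $[\varepsilon D, f_{\delta}]$ is bounded, equal to $\frac{\varepsilon}{i} f_{\delta}'$) yields
$$\| [ \varphi ( \varepsilon D) , f_{\delta} ] \| \leq C_{\varphi} \| [ \varepsilon D , f_{\delta} ] \| = C_{\varphi} \, \varepsilon \, \| f_{\delta}' \|_{\infty},$$
which tends to $0$ as $\varepsilon \to 0$ for $\delta$ fixed. A standard two-$\eta$ argument — given $\eta > 0$, fix $\delta$ so that $2\|\varphi\|_\infty \|f - f_\delta\|_\infty < \eta/2$, then take $\varepsilon$ small enough that $C_\varphi \varepsilon \|f_\delta'\|_\infty < \eta/2$ — combines the two estimates to give $\| [ \varphi ( \varepsilon D) , f ] \| = o_{\varepsilon \to 0} (1)$.

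The only delicate ingredient is the uniform boundedness of $\varphi ( \varepsilon D )$ in $\varepsilon$, which is exactly why the hypothesis $\rho \leq 0$ is made rather than the weaker $\rho < 1$ of Lemma \ref{a25}: if $\rho \in (0,1)$, the values $\varphi(\varepsilon k)$ could grow in $k$, and the remainder term $[\varphi(\varepsilon D), f - f_\delta]$ could no longer be controlled by the uniform norm of $f - f_\delta$. Apart from this point, the argument is routine functional-analytic approximation and presents no serious obstacle.
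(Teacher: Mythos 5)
Your proof is correct and follows essentially the same route as the paper: approximate $f$ by smooth functions in $L^\infty(\T)$, apply Lemma \ref{a25} to the smooth part to get an $\mathcal{O}(\varepsilon)$ bound, control the remainder using the uniform boundedness of $\varphi(\varepsilon D)$, and conclude by a standard diagonal argument. Your explicit observation that $\rho\le 0$ is what guarantees the uniform bound on $\|\varphi(\varepsilon D)\|$ is a useful clarification, but the underlying argument matches the one in the paper.
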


\begin{proof}
By Weierstrass's theorem, there exist $f_{k} \in C^{1} (\T )$ satisfying $f_{k} \to f$ in $L^{\infty} (\T )$. Then, viewed as operators, we have $f_{k} \to f$ in ${\mathcal L} (L^{2} (\T ))$. Remark that $[ \varepsilon D , f_{k}] = -\varepsilon i f_{k}'$. From Lemma \ref{a25}, we obtain
\begin{equation*}
\Vert [ \varphi (\varepsilon D ) , f_{k} ] \Vert \leq \varepsilon C_{\varphi} \Vert f_{k} ' \Vert_{\infty} .
\end{equation*}
Then, using that $\varphi$ is bounded,
\begin{align*}
[ \varphi (\varepsilon D ) , f ] =& [ \varphi (\varepsilon D ) , f_{k} ] + o_{k \to \infty} (1)  = \CO^{k} ( \varepsilon ) + o_{k \to \infty} (1)  \\
=& o_{\varepsilon \to 0} (1) ,
\end{align*}
since $[ \varphi (\varepsilon D ) , f ]$ does not depend on $k$.
\end{proof}

\Subsection{Essential spectrum of the product of Toeplitz operators}

Here, we recall the theorem of Douglas in our setting. In fact, the result of Douglas is true in a more general framework (see \cite[Theorem 4.5.10]{Ni02_01}). We shall give in section \ref{a41} an alternative proof of the following theorem, more related to our approach.

\begin{theorem}[{\bf Douglas}]\sl \label{a24}
Let $f,g \in C^{0} (\T , \R)$ with $g \geq 0$. The bounded self-adjoint operator $T (g)^{1/2} T (f) T (g)^{1/2}$ satisfies on $\im \Pi$
\begin{equation*}
\sigma_{{\rm ess}} \big( T (g)^{1/2} T (f) T (g)^{1/2} \big) = \big[ \inf ( f g) ,\sup ( f g) \big] .
\end{equation*}
Here, $\sigma_{{\rm ess}} (A)$ denotes the essential spectrum of $A$.
\end{theorem}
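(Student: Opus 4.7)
The plan is to match both sides of the claimed identity separately, combining a semiclassical Weyl-sequence construction for one inclusion with a reduction to the Toeplitz operator $T(fg)$ modulo compact perturbations for the other. Both halves fit naturally into the functional framework of Section~\ref{a53}.

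For the lower bound $[\inf(fg), \sup(fg)] \subset \sigma_{\rm ess}$, I would build an explicit Weyl sequence at each level $\lambda$. Pick $x_0 \in \T$ with $(fg)(x_0) = \lambda$ (available by continuity of $fg$ and connectedness of $\T$) and consider the semiclassical coherent states
\[
u_n (x) = c_n \, \psi\bigl(\sqrt n\,(x - x_0)\bigr) \, e^{i n x}, \qquad n \geq 1,
\]
where $\psi \in C_c^\infty(\R)$ has small support and $c_n$ normalizes $\Vert u_n \Vert_{L^2(\T)} = 1$. The Fourier spectrum of $u_n$ concentrates at $k = n$ with width $O(\sqrt n)$, so $\Pi u_n = u_n + o(1)$; the position concentration at $x_0$ gives $(\varphi - \varphi(x_0)) u_n \to 0$ in $L^2(\T)$ for every $\varphi \in C^0(\T)$, whence $T(\varphi) u_n = \varphi(x_0) u_n + o(1)$; and polynomial approximation of $s \mapsto \sqrt s$ on $[0, \Vert T(g) \Vert]$ upgrades this to $T(g)^{1/2} u_n = \sqrt{g(x_0)}\, u_n + o(1)$. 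Combining, $T(g)^{1/2} T(f) T(g)^{1/2} u_n = \lambda u_n + o(1)$, and because $u_n \rightharpoonup 0$ (its Fourier support escapes to infinity), $(u_n)_{n}$ is a Weyl sequence for $\lambda$.

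For the reverse inclusion $\sigma_{\rm ess} \subset [\inf(fg), \sup(fg)]$, I would prove that $T(g)^{1/2} T(f) T(g)^{1/2} - T(fg)$ is compact, whence Weyl's invariance theorem together with the classical Hartman--Wintner identity $\sigma_{\rm ess}(T(h)) = [\inf h, \sup h]$ for real continuous $h$ closes the argument. The compactness is obtained in two layers: Widom's identity $(\ref{prodtoep})$ combined with the compactness of Hankel operators with continuous symbol yields both $T(g^{1/2})^2 - T(g)$ and $T(g^{1/2}) T(f) T(g^{1/2}) - T(fg)$ compact; and the general fact that $A^{1/2} - B^{1/2}$ is compact whenever $A, B \geq 0$ are bounded self-adjoint with $A - B$ compact, applied to $A = T(g)$ and $B = T(g^{1/2})^2$, supplies $T(g)^{1/2} - T(g^{1/2})$ compact. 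Assembling these gives $T(g)^{1/2} T(f) T(g)^{1/2} = T(fg) + \mathrm{compact}$.

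The main obstacle I anticipate is the non-symbolic nature of the operator square root $T(g)^{1/2}$: it is \emph{not} a Toeplitz operator and therefore sits outside the direct scope of Widom's identity. In the Weyl-sequence step this is tamed via polynomial functional calculus on the quasimode $u_n$; in the compact-perturbation step via the standard compactness result for operator square roots of close nonnegative operators. Apart from this detour, the remaining ingredients are standard: coherent-state concentration in phase space, compactness of Hankel operators with continuous symbol, and the Hartman--Wintner description of $\sigma_{\rm ess}(T(h))$ for real continuous $h$.
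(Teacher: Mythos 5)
Your proposal is correct, but it takes a genuinely different route from the paper's. For the inclusion $\sigma_{\rm ess} \subset [\inf(fg), \sup(fg)]$, the paper stays inside its functional-calculus framework: it inserts a smooth low-frequency cutoff $\psi(\varepsilon D)$, uses the power series $A^{1/2} = \sum_j c_j(1-A)^j$ together with the commutator estimate of Lemma~\ref{a26} to replace $T(g)^{1/2}\psi(\varepsilon D)$ by $\psi(\varepsilon D) g^{1/2} + o_{\varepsilon\to 0}(1)$, and arrives at $T(g)^{1/2}T(f)T(g)^{1/2} = T(fg) + R_\varepsilon + e_\varepsilon$ with $R_\varepsilon$ of \emph{finite rank} and $\Vert e_\varepsilon\Vert = o_{\varepsilon\to 0}(1)$; Weyl's theorem and $\varepsilon\to 0$ then yield the inclusion with no reference to Hankel operators or to the essential spectrum of a single Toeplitz operator. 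You instead invoke three classical black boxes — Hartman's compactness theorem for Hankel operators with continuous symbol via Widom's identity, the compactness of $A^{1/2}-B^{1/2}$ when $A-B$ is compact (true, e.g.\ by polynomial approximation of $\sqrt{\cdot}$ on $[0,\Vert T(g)\Vert]$, since $p(A)-p(B)$ is compact for every polynomial), and the Hartman--Wintner identity $\sigma_{\rm ess}(T(h)) = [\inf h, \sup h]$. Note that this chain already gives \emph{equality}, so your lower-bound argument is logically redundant, though it is the more elementary half. For that lower bound both you and the paper use the same coherent states $\alpha^{1/2}\varphi(\alpha(x-x_0))e^{i\beta x}$ with $\beta = \alpha^2$; but you do something slightly cleaner than the paper: you work directly on the self-adjoint operator $T(g)^{1/2}T(f)T(g)^{1/2}$ (again via polynomial calculus for the square root), observe that the sequence is a singular Weyl sequence because it converges weakly to zero, and conclude membership in $\sigma_{\rm ess}$ at once. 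The paper instead first shows $(fg)(x_0)\in\sigma(T(f)T(g))$, passes to the symmetrized operator via Lemma~\ref{a30}, and then needs the extra observation that a spectral interval with non-empty interior cannot consist of isolated eigenvalues of finite multiplicity (with the degenerate case $\inf(fg)=\sup(fg)$ handled separately). In short: your upper bound is the classical Toeplitz-algebra argument and uses more off-the-shelf inputs, whereas the paper's upper bound is tailored to its semiclassical toolkit and is essentially self-contained; your lower bound is a streamlining of the paper's.
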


In Theorem \ref{a24}, the operator $T (g)^{1/2} T (f) T (g)^{1/2}$ is viewed as an operator on $\im \Pi$. On $L^{2} ( \T )$, this operator is a block diagonal operator with respect to the orthogonal sum $L^{2} = \im \Pi \oplus^{\perp} \im (1 - \Pi )$ and is equal to $0$ on $\im ( 1 - \Pi )$. In particular, we have

\begin{remark}\sl \label{a3}
If the operator $T (g)^{1/2} T (f) T (g)^{1/2}$ is viewed on $L^{2} (\T )$, we have
\begin{equation*}
\sigma_{{\rm ess}} \big( T (g)^{1/2} T (f) T (g)^{1/2} \big) = \big[ \inf ( f g) ,\sup ( f g) \big] \cup \{ 0 \} .
\end{equation*}
\end{remark}

\section{Proof of Theorem \ref{a8}}
\label{a51}

The goal of this section is to prove Theorem \ref{a8}. First of all, one can observe that part {\it ii)} clearly implies {\it iii)}. In the next subsection, we first show that {\it i)} implies {\it ii)}.

\subsection{The implication {\it i)} gives {\it ii)}}
\label{subkl}

\begin{lemma}\sl \label{lemmasp}
Let $f,g \in C^{0} (\T , \R)$ with $g \geq 0$. Then,
\begin{equation*}
T_{n} (g)^{1/2} T_{n} (f) T_{n} (g)^{1/2} \longrightarrow T (g)^{1/2} T (f) T (g)^{1/2} 
\end{equation*}
strongly on $L^{2} (\T )$. If $\lambda$ belongs to the spectrum of $T(f)T(g)$ on $\im \Pi$, then there exists an eigenvalue $\lambda_{n}$ of $T_{n}(f) T_{n}(g)$ on $\im \Pi_{n}$ such that $\lambda_{n} \to \lambda$.
\end{lemma}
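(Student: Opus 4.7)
I would start by observing that $\Pi_{n}\to \Pi$ strongly on $L^{2}(\T)$ is immediate from Parseval: for $u\in\im\Pi$, $\|(I-\Pi_{n})u\|^{2} = \sum_{k>n}|\widehat u_{k}|^{2} \to 0$. Viewing $f,g\in L^{\infty}(\T)$ as bounded multiplication operators, the elementary fact that the product of strongly convergent, uniformly bounded sequences is strongly convergent yields $T_{n}(f)=\Pi_{n} f\Pi_{n}\to \Pi f\Pi = T(f)$ strongly, and similarly $T_{n}(g)\to T(g)$. To pass to the square roots I would invoke continuous functional calculus: for positive self-adjoint $B_{n}\to B$ strongly with uniform norm bound $M$ and $\varphi\in C([0,M])$, one has $\varphi(B_{n})\to\varphi(B)$ strongly, obtained by uniform polynomial approximation on $[0,M]$ together with strong convergence of the powers of $B_{n}$. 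Applied to $\varphi(t)=t^{1/2}$ on $[0,\|g\|_{\infty}]$ this gives $T_{n}(g)^{1/2}\to T(g)^{1/2}$ strongly, and one more application of the product rule delivers the claimed strong convergence of the triple product.

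\textbf{Reduction and Weyl sequence.} Write $A_{n}=T_{n}(g)^{1/2}T_{n}(f)T_{n}(g)^{1/2}$ and $A=T(g)^{1/2}T(f)T(g)^{1/2}$. By Lemma \ref{a30}, the spectrum of $T(f)T(g)$ on $\im\Pi$ equals $\sigma(A)$ and that of $T_{n}(f)T_{n}(g)$ on $\im\Pi_{n}$ equals $\sigma(A_{n}|_{\im\Pi_{n}})$, so it suffices to produce $\lambda_{n}\in\sigma(A_{n}|_{\im\Pi_{n}})$ with $\lambda_{n}\to\lambda$. Since $A$ is self-adjoint, fix a Weyl sequence $u_{m}\in\im\Pi$ with $\|u_{m}\|=1$ and $r_{m}:=\|(A-\lambda)u_{m}\|\to 0$. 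The crucial structural identity is $T_{n}(g)(I-\Pi_{n})=0$, which propagates through the square-root functional calculus to give $T_{n}(g)^{1/2}(I-\Pi_{n})=0$, and hence $A_{n}=A_{n}\Pi_{n}$.

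\textbf{Approximate eigenvectors and conclusion.} Set $v_{n,m}:=\Pi_{n}u_{m}\in\im\Pi_{n}$. Using $A_{n}=A_{n}\Pi_{n}$ I decompose
$$
(A_{n}-\lambda)v_{n,m}=(A_{n}-A)u_{m}+(A-\lambda)u_{m}+\lambda(I-\Pi_{n})u_{m}.
$$
For each fixed $m$, the strong convergences $A_{n}\to A$ and $\Pi_{n}\to I$ on $\im\Pi$ drive the first and third terms to zero as $n\to\infty$, while the middle term has norm $r_{m}$; simultaneously $\|v_{n,m}\|\to 1$. Since $A_{n}|_{\im\Pi_{n}}$ is self-adjoint on a finite-dimensional space, the standard bound $\dist(\lambda,\sigma(A_{n}|_{\im\Pi_{n}}))\le \|(A_{n}-\lambda)v\|/\|v\|$ applied with $v=v_{n,m}$, together with a diagonal extraction (first let $n\to\infty$ for fixed $m$, then $m\to\infty$), gives $\dist(\lambda,\sigma(A_{n}|_{\im\Pi_{n}}))\to 0$. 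Choosing $\lambda_{n}$ to be an eigenvalue of $A_{n}|_{\im\Pi_{n}}$ realizing this distance finishes the proof.

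\textbf{Main obstacle.} There is no deep analytic difficulty, only careful bookkeeping: one must distinguish the operators on all of $L^{2}(\T)$ from their restrictions to $\im\Pi$ or $\im\Pi_{n}$, and exploit the identity $A_{n}=A_{n}\Pi_{n}$ (equivalently, that $A_{n}u$ depends only on $\Pi_{n}u$) in order to turn a Weyl sequence for $A$ living in $\im\Pi$ into a genuine approximate eigenvector for $A_{n}$ living inside the cut-off subspace $\im\Pi_{n}$, where the finite-dimensional self-adjoint spectral theory can be applied.
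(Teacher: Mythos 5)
Your proof is correct. The first half (strong convergence of the triple product) matches the paper almost verbatim: the paper likewise uses the product rule for strongly convergent uniformly bounded sequences (Lemma III.3.8 of Kato) and passes to the square root via functional calculus (Problem VI.14 / Theorem VI.9 of Reed--Simon), which is exactly your uniform polynomial approximation argument.

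The second half takes a genuinely different route. The paper adds the auxiliary term $M(\Pi-\Pi_n)$ with $M=\Vert f\Vert_\infty\Vert g\Vert_\infty+1$ so that the perturbed operators $T_n(g)^{1/2}T_n(f)T_n(g)^{1/2}+M(\Pi-\Pi_n)$ converge strongly on $\im\Pi$, then invokes Kato's Corollary VIII.1.6 (strong convergence implies convergence in the generalized sense) and Theorem VIII.1.14 (lower semicontinuity of the spectrum under generalized convergence) to produce approximating eigenvalues; finally it argues that for large $n$ these eigenvalues must come from the $\im\Pi_n$ block rather than the shifted $M(\Pi-\Pi_n)$ block. You instead give a hands-on Weyl-sequence argument: you truncate a Weyl sequence $u_m$ by $\Pi_n$, use the structural identity $A_n=A_n\Pi_n$ (following from $T_n(g)^{1/2}(I-\Pi_n)=0$) to keep the truncations inside $\im\Pi_n$ and to control $(A_n-\lambda)\Pi_n u_m$, and then apply the elementary spectral-distance bound for self-adjoint matrices together with a diagonal extraction. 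Your argument is more self-contained and elementary, avoiding both Kato's generalized-convergence machinery and the auxiliary shift; the paper's version is shorter because it leans on two strong abstract lemmas. Both handle the same subtlety, namely that $0$ is an artificial eigenvalue of $A_n$ on $\im\Pi\setminus\im\Pi_n$: the paper pushes it away with $M(\Pi-\Pi_n)$, you sidestep it by working directly with the restriction $A_n|_{\im\Pi_n}$ from the start.
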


\begin{proof}
For a sequence of bounded operators $(A_n)$, we shall denote
\begin{equation*}
A_{n} \overset{s}{\longrightarrow} A ,
\end{equation*}
if $A_{n}$ converges strongly to $A$. Since $\Pi_{n} \overset{s}{\longrightarrow} \Pi$, 
it follows from Lemma III.3.8 of \cite{Ka95_01} that for all $f\in L^{\infty}(\T)$,
$T_{n} (f) \overset{s}{\longrightarrow} T (f)$. 
In particular, from Problem VI.14 of \cite{ReSi80_01} (see also Theorem VI.9 of \cite{ReSi80_01}), 
$T_{n} (g)^{1/2} \overset{s}{\longrightarrow} T (g)^{1/2}$. 
Consequently, we deduce from Lemma III.3.8 of \cite{Ka95_01} that
\begin{equation} \label{a27}
T_{n} (g)^{1/2} T_{n} (f) T_{n} (g)^{1/2} \overset{s}{\longrightarrow} T (g)^{1/2} T (f) T (g)^{1/2} ,
\end{equation}
on $L^{2} (\T )$. In particular, we obtain on $\im \Pi$
\begin{equation*}
T_{n} (g)^{1/2} T_{n} (f) T_{n} (g)^{1/2} + M ( \Pi - \Pi_{n} ) \overset{s}{\longrightarrow} T (g)^{1/2} T (f) T (g)^{1/2} ,
\end{equation*}
for all $M \in \R$. We choose $\mu = \Vert f \Vert_{\infty} \Vert g \Vert_{\infty}$ and $M = \mu +1$. Therefore, it follows from Corollary VIII.1.6 of \cite{Ka95_01} that 
\begin{equation*}
T_{n} (g)^{1/2} T_{n} (f) T_{n} (g)^{1/2} + M ( \Pi - \Pi_{n} ) \longrightarrow T (g)^{1/2} T (f) T (g)^{1/2} 
\end{equation*}
strongly in the generalized sense on $\im \Pi$.

Consequently, Lemma \ref{a30} and Theorem VIII.1.14 of \cite{Ka95_01} imply that, for each $\lambda$ belonging to $\sigma(T(f)T(g))$ on $\im \Pi$, there exists an eigenvalue $\lambda_{n}$ of $T_{n} (g)^{1/2} T_{n} (f) T_{n} (g)^{1/2} + M ( \Pi - \Pi_{n} )$ on $\im \Pi$ such that $\lambda_{n} \to \lambda$. Since $\Vert T (g)^{1/2} T (f) T (g)^{1/2} \Vert \leq \mu$, we necessarily have $\lambda \in [ - \mu , \mu ]$ and then $M \geq \vert \lambda \vert +1$. In particular, for $n$ large enough, $M > \vert \lambda_{n} \vert + 1/2$. Therefore, $\lambda_{n}$ is an eigenvalue of $T_{n} (g)^{1/2} T_{n} (f) T_{n} (g)^{1/2}$ on $\im \Pi_{n}$ because
\begin{equation*}
T_{n} (g)^{1/2} T_{n} (f) T_{n} (g)^{1/2} + M ( \Pi - \Pi_{n} ) = T_{n} (g)^{1/2} T_{n} (f) T_{n} (g)^{1/2} \oplus^{\perp} M ( \Pi - \Pi_{n} ) ,
\end{equation*}
is a block diagonal operator with respect to the orthogonal sum $\im \Pi = \im \Pi_{n} \oplus^{\perp} \im (\Pi - \Pi_{n})$.
\end{proof}

\Subsection{The implication {\it iii)} gives {\it i)}}
\label{subloc}

Let $\lambda_{N}$ be a sequence of eigenvalues of $T_{N} (f) T_{N} (g)$ such that $\lambda_{N} \to \lambda \in \R$. Here $N$ is a subsequence of $\N$ and we have to show that $\lambda$ is in the spectrum of $T (f) T (g)$. From Lemma \ref{a30} and Theorem \ref{a24}, we know that $[ \inf ( f g) ,\sup ( f g) ]$ is always inside the spectrum of $T (f) T (g)$. Thus, we can assume that
\begin{equation} \label{a20}
\lambda \notin \big[ \inf ( f g) ,\sup ( f g) \big] .
\end{equation}

By Weierstrass's theorem, there exists a sequence of $C^{\infty} (\T )$ functions $(f_k)$ such that $f_{k} \to f$ in $L^{\infty} ( \T )$ and $\supp \widehat{f_{k}} \subset [ - k , k ]$. We also consider $(g_{k})$ a sequence corresponding to $g$ with the same properties mutatis mutandis. In particular, for all $n \in \N$,
\begin{equation}\label{a1}
T_{n} (f) = T_{n}(f_{k}) + o_{k \to \infty} (1) \qquad \text{ and } \qquad T (f) = T (f_{k}) + o_{k \to \infty} (1) .
\end{equation}
Recall that, by definition, a $o_{k \to \infty} (1)$ is uniform with respect to $n$.

Finally, let $u_{N} \in \im \Pi_{N}$ be an eigenvector of $T_{N} (f) T_{N} (g)$ associated with $\lambda_{N}$ and satisfying $\Vert u_{N} \Vert =1$. From \eqref{a1},
\begin{gather}
T_{N}(f)T_{N}(g) u_{N} = \lambda_{N} u_{N} = \lambda u_{N} + o_{N \to \infty}(1)    \label{a13}  \\
T_{N} (f_{k})  T_{N} (g_{k}) u_{N} = \lambda u_{N} + o_{N \to \infty} (1) + o_{k \to \infty} (1) . \label{a2}
\end{gather}
In the following, we denote $D_{n} = n^{-1} D$.

\Subsubsection{Localization of the eigenvectors}

\begin{lemma}\sl \label{lemmaloc}
Let $\varphi \in C^{\infty}_{0}(]0,1[)$. Then, in $L^{2} (\T )$ norm,
\begin{equation*}
\varphi(D_{N}) u_{N} = o_{N \to \infty} (1).
\end{equation*}
\end{lemma}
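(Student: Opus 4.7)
The plan is to apply the spectral cut-off $\varphi(D_{N})$ to both sides of the eigenvalue equation, push it all the way to the right through $T_{N}(f_{k}) T_{N}(g_{k})$ until it becomes a multiplication operator, and then exploit the hypothesis \eqref{a20} to conclude that $\varphi(D_{N}) u_{N}$ must be small.

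Concretely, I would first apply $\varphi(D_{N})$ to \eqref{a2}, which rewrites as
\begin{equation*}
\varphi(D_{N}) T_{N}(f_{k}) T_{N}(g_{k}) u_{N} = \lambda \, \varphi(D_{N}) u_{N} + o_{N\to\infty}(1) + o_{k\to\infty}(1),
\end{equation*}
using that $\Vert \varphi(D_{N}) \Vert$ is uniformly bounded by $\Vert \varphi \Vert_{\infty}$. To handle the left-hand side, I rely on three observations. First, since $\supp \varphi \subset \,]0,1[\,$, the Fourier support of $\varphi(D_{N})$ sits in $\,]0, N[$, so $\varphi(D_{N}) \Pi_{N} = \varphi(D_{N})$. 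Second, because $\supp \widehat{g_{k}} \subset [-k, k]$, the function $g_{k} u_{N}$ has Fourier support in $[-k, N+k]$, and for $N$ large enough depending on $k$ the operator $\varphi(D_{N})$ annihilates the piece supported outside $[0, N]$; hence $\varphi(D_{N}) \Pi_{N} g_{k} u_{N} = \varphi(D_{N}) g_{k} u_{N}$. Third, Lemma \ref{a26} applied to $f_{k}$ and $g_{k}$ (both in $C^{\infty}(\T)$) gives $[\varphi(D_{N}), f_{k}] = o_{N\to\infty}^{k}(1)$ and $[\varphi(D_{N}), g_{k}] = o_{N\to\infty}^{k}(1)$. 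Combining these allows me to commute $\varphi(D_{N})$ successively past $\Pi_{N}$, $f_{k}$, $\Pi_{N}$, and $g_{k}$, which yields
\begin{equation*}
\varphi(D_{N}) T_{N}(f_{k}) T_{N}(g_{k}) u_{N} = f_{k} g_{k} \, \varphi(D_{N}) u_{N} + o_{N\to\infty}^{k}(1).
\end{equation*}

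Putting the two identities together gives
\begin{equation*}
(f_{k} g_{k} - \lambda) \, \varphi(D_{N}) u_{N} = o_{N\to\infty}^{k}(1) + o_{N\to\infty}(1) + o_{k\to\infty}(1).
\end{equation*}
By \eqref{a20}, the continuous function $fg - \lambda$ is bounded away from $0$ on $\T$ by some $\delta > 0$; since $f_{k} g_{k} \to f g$ in $L^{\infty}(\T)$, for $k$ large we still have $\vert f_{k} g_{k} - \lambda \vert \geq \delta/2$ pointwise, so multiplication by $(f_{k} g_{k} - \lambda)^{-1}$ is a bounded operator with norm at most $2/\delta$. Solving yields
\begin{equation*}
\Vert \varphi(D_{N}) u_{N} \Vert \leq \tfrac{2}{\delta} \bigl( o_{N\to\infty}^{k}(1) + o_{N\to\infty}(1) + o_{k\to\infty}(1) \bigr),
\end{equation*}
and choosing first $k$ large and then $N$ large (depending on $k$) drives the right-hand side to $0$, proving the lemma.

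The main technical point to keep straight is the interplay of the two parameters: the approximation $f_{k} \to f$ controls errors uniformly in $N$ but only to order $o_{k\to\infty}(1)$, while the commutator estimate of Lemma \ref{a26} gives an error that vanishes as $N \to \infty$ only for each fixed $k$. The argument therefore rests on a standard diagonal choice of $(k, N)$, together with the fact that $\varphi$ is supported strictly inside $\,]0,1[$ (with a positive distance to $\{0, 1\}$ which is crucial for absorbing the Fourier support of $f_{k}, g_{k}$ when $N$ is large).
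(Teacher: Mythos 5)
Your proof is correct and rests on the same core mechanism as the paper's: apply $\varphi(D_{N})$ to the eigenvalue equation, use $\varphi(D_{N}) \Pi_{N} = \varphi(D_{N})$ and the commutator estimate to push $\varphi(D_{N})$ through the Toeplitz factors, obtain $(\text{symbol product} - \lambda)\varphi(D_{N})u_{N} = o(1)$, and invert using \eqref{a20}. The difference is that you route the argument through the smooth approximations $f_{k}, g_{k}$ and equation \eqref{a2}, which forces a two-parameter bookkeeping and a diagonal limit. This detour is unnecessary: Lemma \ref{a26} is stated and proved for $f \in C^{0}(\T)$, not just $C^{\infty}$ (the reduction to $C^{1}$ by Weierstrass is already built into its proof), so one can start directly from \eqref{a13} and commute $\varphi(D_{N})$ past $f$ and $g$ themselves. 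The paper does exactly this, getting $(fg - \lambda)\varphi(D_{N})u_{N} = o_{N\to\infty}(1)$ in a single parameter and concluding immediately since $(fg-\lambda)^{-1} \in L^{\infty}$. Your ``second observation'' about the Fourier support of $g_{k}u_{N}$ is also redundant even within your own scheme: the identity $\varphi(D_{N})\Pi_{N} g_{k} u_{N} = \varphi(D_{N}) g_{k} u_{N}$ follows instantly from $\varphi(D_{N})\Pi_{N} = \varphi(D_{N})$, with no need to control $\supp\widehat{g_{k}}$ at this stage. (The Fourier-support bookkeeping is genuinely needed later, in Lemma \ref{a5}, but not here.) So: right idea, correct conclusion, but you can cut the $f_{k}$ scaffolding entirely and shorten the proof considerably by exploiting the full strength of Lemma \ref{a26}.
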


In \cite{BoGuCr07_01}, B\"{o}ttcher, Crespo and Guti\'errez-Guti\'errez have established that, for any $\varepsilon (n) >0$ with $n \varepsilon (n)$ going to infinity,
\begin{equation*}
\max_{n \varepsilon (n) < k < n (1 - \varepsilon (n)) } \Vert (T_n(fg)-T_n(f)T_n(g)) e_{k} \Vert \longrightarrow 0 ,
\end{equation*}
where $(e_{k})$ is the standard basis of $l^{2} ( \N )$. Here, we will show and use
\begin{equation*}
\Vert ( f g -T_n(f)T_n(g)) ( \Pi_{(1 - \varepsilon) n} - \Pi_{\varepsilon n}) \Vert \longrightarrow 0 ,
\end{equation*}
in operator norm for all $0< \varepsilon < 1/2$.

\begin{proof}
From Lemma \ref{a26}, we have
\begin{align}
\varphi (D_{N}) T_{N}(f) =& \varphi(D_{N}) {\bf 1}_{[0 ,1]} (D_{N}) f {\bf 1}_{[0 ,1]} (D_{N}) = \varphi (D_{N}) f {\bf 1}_{[0 ,1]} (D_{N})  \nonumber  \\
=& f \varphi (D_{N}) {\bf 1}_{[0 ,1]} (D_{N}) + o_{N \to \infty} (1)  \nonumber  \\
=& f \varphi (D_{N}) + o_{N \to \infty} (1).   \label{a16}
\end{align}
Applying two times this estimate, we obtain
\begin{align*}
\varphi (D_{N}) T_{N}(f)T_{N}(g)u_{N} 
&= f \varphi (D_{N}) T_{N} (g) u_{N} + o_{N \to \infty} (1)  \\
&= f g \varphi (D_{N}) u_{N} + o_{N \to \infty} (1).
\end{align*}
Then, \eqref{a13} gives
\begin{equation*}
( f g - \lambda ) \varphi (D_{N}) u_{N} = o_{N \to \infty} (1).
\end{equation*}
Since $\lambda \notin [ \inf ( f g) ,\sup ( f g) ]$, the function $( f g - \lambda )^{-1}$ is in $L^{\infty} (\T )$ and the lemma follows from the last equation.
\end{proof}

Now, we take $\varphi \in C^{\infty}_{0}(]0,1[ , [0 ,1])$ such that $\varphi =1$ near $[\varepsilon, 1 - \varepsilon ]$ for $\varepsilon >0$ small enough (we choose $\varepsilon = 1/8$). Let $\varphi^{-} \in C^{\infty}_{0} ([-\varepsilon, 2 \varepsilon ] , [0 ,1])$ and $\varphi^{+} \in C^{\infty}_{0} ([ 1-2 \varepsilon , 1+ \varepsilon ], [0 ,1] )$ be two functions such that
\begin{equation*}
\varphi^{-} + \varphi + \varphi^{+} = 1 ,
\end{equation*}
in the neighborhood of $[0,1]$. Set
\begin{equation*}
u_{N}^{\pm} = \varphi^{\pm} (D_{N}) u_{N} = \varphi^{\pm} (D_{N}) {\bf 1}_{[0 ,1]} (D_{N}) u_{N}.
\end{equation*}
As $\Vert u_{N} \Vert =1$, it follows from Lemma \ref{lemmaloc} that 
\begin{equation}
\label{ump}
\Vert u_{N}^{-} + u_{N}^{+} \Vert = 1 + o_{N \to \infty} (1).
\end{equation}
In particular, we can assume, up to the extraction of a subsequence, that
\begin{equation*}
\forall N \quad \Vert u_{N}^{-} \Vert \geq 1/3 \qquad \text{ or } \qquad \forall N \quad \Vert u_{N}^{+} \Vert \geq 1/3 .
\end{equation*}
In the next section, we will suppose
\begin{equation} \label{a19}
\Vert u_{N}^{-} \Vert \geq 1/3.
\end{equation}
The case $\Vert u_{N}^{+} \Vert \geq 1/3$ follows essentially the same lines and is treated in Section \ref{a17}. But before, we show that $u_{N}^{-}$ and $u_{N}^{+}$ are both quasimodes of $T_{N} (f) T_{N} (g)$ (this means that they are eigenvectors modulo a small term).

\begin{lemma}\sl \label{a15}
We have
\begin{equation*}
T_{N} (f_{k}) T_{N} (g_{k}) u_{N}^{\pm} = 
\lambda u_{N}^{\pm} + o_{k \to \infty}(1) + o_{N \to \infty}^{k}(1).
\end{equation*}
\end{lemma}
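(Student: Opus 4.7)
My plan is to show that $u_{N}^{\pm}$ is an approximate eigenvector of $T_{N}(f_{k})T_{N}(g_{k})$ by commuting the spectral cutoff $\varphi^{\pm}(D_{N})$ through the product and applying the approximate eigenvalue identity \eqref{a2} already established for $u_{N}$ itself. The two error types in the statement will then arise naturally: an $o^{k}_{N\to\infty}(1)$ contribution from the commutators, and an $o_{k\to\infty}(1)$ contribution that is already present in \eqref{a2}.

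The key step is the commutator estimate
\begin{equation*}
\bigl[ T_{N}(f_{k}), \varphi^{\pm}(D_{N}) \bigr] = o^{k}_{N \to \infty}(1), \qquad \bigl[ T_{N}(g_{k}), \varphi^{\pm}(D_{N}) \bigr] = o^{k}_{N \to \infty}(1),
\end{equation*}
in operator norm. To get this I observe that $\Pi_{N} = \mathbf{1}_{[0,1]}(D_{N})$ and $\varphi^{\pm}(D_{N})$ are both bounded Borel functions of the self-adjoint operator $D_{N}$, hence commute. Writing $T_{N}(f_{k}) = \Pi_{N} f_{k} \Pi_{N}$ on each side of the commutator and moving $\Pi_{N}$ across $\varphi^{\pm}(D_{N})$ yields the factorization
\begin{equation*}
\bigl[ T_{N}(f_{k}), \varphi^{\pm}(D_{N}) \bigr] = \Pi_{N} \bigl[ f_{k}, \varphi^{\pm}(D_{N}) \bigr] \Pi_{N}.
\end{equation*}
Since $f_{k} \in C^{0}(\T)$ is fixed and $\varphi^{\pm} \in S^{0}(\R)$, Lemma \ref{a26} applied with $\varepsilon = 1/N$ gives $\Vert [ f_{k}, \varphi^{\pm}(D_{N}) ] \Vert = o^{k}_{N\to\infty}(1)$, and the same argument applies to $g_{k}$.

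With these commutator estimates in hand and the uniform bounds $\Vert T_{N}(f_{k}) \Vert, \Vert T_{N}(g_{k}) \Vert = \CO^{k}(1)$, I move $\varphi^{\pm}(D_{N})$ to the left across the product by two successive commutations:
\begin{equation*}
T_{N}(f_{k}) T_{N}(g_{k}) u_{N}^{\pm} = T_{N}(f_{k}) T_{N}(g_{k}) \varphi^{\pm}(D_{N}) u_{N} = \varphi^{\pm}(D_{N}) T_{N}(f_{k}) T_{N}(g_{k}) u_{N} + o^{k}_{N \to \infty}(1).
\end{equation*}
Substituting \eqref{a2} and using that $\varphi^{\pm}(D_{N})$ is uniformly bounded, the right-hand side equals $\lambda u_{N}^{\pm} + o_{N \to \infty}(1) + o_{k \to \infty}(1)$, which proves the lemma. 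The only delicate point is keeping straight the two independent limits: the commutator contribution is $o^{k}_{N \to \infty}(1)$ (fixed $k$, $N \to \infty$) whereas the eigenvalue relation \eqref{a2} contributes a $k$-uniform $o_{N\to\infty}(1)$ plus a $k$-dependent $o_{k \to \infty}(1)$; they combine cleanly into the stated remainders, with no substantial obstacle.
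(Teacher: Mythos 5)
Your proof is correct and follows essentially the same approach as the paper: both move $\varphi^{\pm}(D_{N})$ to the left through the product $T_{N}(f_{k})T_{N}(g_{k})$ by commuting it with $f_{k}$ and $g_{k}$ via Lemma \ref{a26} (at cost $o^{k}_{N\to\infty}(1)$, using that $\varphi^{\pm}(D_N)$ commutes freely with $\Pi_N={\bf 1}_{[0,1]}(D_N)$), and then invoke \eqref{a2}. Your factorization $[T_{N}(f_{k}),\varphi^{\pm}(D_{N})]=\Pi_{N}[f_{k},\varphi^{\pm}(D_{N})]\Pi_{N}$ is simply a compact packaging of the step-by-step commutations the paper carries out explicitly.
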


\begin{proof}
As in \eqref{a16}, using Lemma \ref{a26}, we get
\begin{align}
T_{N} (f_{k}) T_{N} (g_{k}) u_{N}^{\pm} =& {\bf 1}_{[0 ,1]} (D_{N}) f_{k} {\bf 1}_{[0 ,1]} (D_{N}) g_{k} {\bf 1}_{[0 ,1]} (D_{N}) \varphi^{\pm} (D_{N}) u_{N}  \nonumber  \\
=& {\bf 1}_{[0 ,1]} (D_{N}) f_{k} {\bf 1}_{[0 ,1]} (D_{N}) g_{k} \varphi^{\pm} (D_{N}) {\bf 1}_{[0 ,1]} (D_{N}) u_{N}   \nonumber  \\
=& {\bf 1}_{[0 ,1]} (D_{N}) f_{k} {\bf 1}_{[0 ,1]} (D_{N}) \varphi^{\pm} (D_{N}) g_{k} {\bf 1}_{[0 ,1]} (D_{N}) u_{N} + o_{N \to \infty}^{k} (1)  \nonumber  \\
=& {\bf 1}_{[0 ,1]} (D_{N}) \varphi^{\pm} (D_{N}) f_{k} {\bf 1}_{[0 ,1]} (D_{N}) g_{k} {\bf 1}_{[0 ,1]} (D_{N}) u_{N} + o_{N \to \infty}^{k} (1)  \nonumber  \\
=& \varphi^{\pm} (D_{N}) T_{N} (f_{k}) T_{N} (g_{k}) u_{N} + o_{N \to \infty}^{k} (1) .
\end{align}
The lemma follows from \eqref{a2} and the last identity.
\end{proof}

\Subsubsection{Concentration near the low frequencies}

Here, we assume \eqref{a19} and we prove that $u_{N}^{-}$, viewed as an element of $\im \Pi$, is a quasimode of $T (f_{k}) T (g_{k})$.

\begin{lemma}\sl \label{a5}
For $4 k \leq N$, we have
\begin{equation*}
T (f_{k}) T (g_{k}) u_{N}^{-} = T_{N} (f_{k}) T_{N} (g_{k}) u_{N}^{-} .
\end{equation*}
\end{lemma}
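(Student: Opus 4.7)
The plan is to track Fourier supports and exploit the fact that both $f_{k}$ and $g_{k}$ are bandlimited (their Fourier coefficients are supported in $[-k,k]$), so multiplication by them can only spread the Fourier support of a vector by at most $k$ in each direction. Once I know that nothing ever spills out past $\pm N$, the two projections $\Pi$ and $\Pi_{N}$ act identically on the intermediate vectors, and the two sides of the claimed identity collapse onto each other.

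First I would pin down the Fourier support of $u_{N}^{-}$. Since $u_{N} \in \im \Pi_{N}$, its Fourier coefficients are supported in $[0,N]$. The operator $\varphi^{-}(D_{N})$ is the Fourier multiplier by $\varphi^{-}(j/N)$, and $\supp \varphi^{-} \subset [-\varepsilon, 2\varepsilon] = [-1/8, 1/4]$. Intersecting, $u_{N}^{-}$ has Fourier support inside $[0, N/4]$; in particular $u_{N}^{-} \in \im \Pi \cap \im \Pi_{N}$ and $\Pi u_{N}^{-} = \Pi_{N} u_{N}^{-} = u_{N}^{-}$.

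Next I would multiply by $g_{k}$. Since multiplication by $g_{k}$ corresponds to convolution by $\widehat{g_{k}}$ on the Fourier side and $\supp \widehat{g_{k}} \subset [-k,k]$, the vector $g_{k} u_{N}^{-}$ has Fourier support in $[-k, N/4 + k]$. Under the hypothesis $4k \leq N$ this is contained in $[-N, N/2]$, so applying $\Pi$ or $\Pi_{N}$ yields the same vector $v := \Pi g_{k} u_{N}^{-} = \Pi_{N} g_{k} u_{N}^{-}$, with Fourier support in $[0, N/4+k] \subset [0, N/2]$. Repeating the same reasoning one step further, $f_{k} v$ has Fourier support in $[-k, N/2 + k] \subset [-N, N]$, so once again $\Pi f_{k} v = \Pi_{N} f_{k} v$. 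Combining:
\begin{equation*}
T(f_{k}) T(g_{k}) u_{N}^{-} = \Pi f_{k} \Pi g_{k} \Pi u_{N}^{-} = \Pi_{N} f_{k} \Pi_{N} g_{k} \Pi_{N} u_{N}^{-} = T_{N}(f_{k}) T_{N}(g_{k}) u_{N}^{-},
\end{equation*}
which is the desired identity. There is no real obstacle here; the whole argument is Fourier-support bookkeeping, and the only thing to check carefully is that the bandwidth $2k$ contributed by the two multiplications (at most $k$ from $g_{k}$ and another $k$ from $f_{k}$) added to the maximal frequency $N/4$ of $u_{N}^{-}$ still stays well within $[-N,N]$, which is exactly what the hypothesis $4k \leq N$ ensures.
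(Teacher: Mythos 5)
Your proof is correct and follows essentially the same route as the paper's: both arguments track the Fourier support of $u_{N}^{-}$ (contained in $[0,N/4]$ since $\supp \varphi^{-} \subset [-1/8,1/4]$), and use that convolution with $\widehat{f_{k}}$ or $\widehat{g_{k}}$ spreads the support by at most $k$ in each direction, so that under $4k \le N$ the projections $\Pi$ and $\Pi_{N}$ agree on every intermediate vector. The only cosmetic difference is that the paper tracks the sharper bound $[-k,N/4+2k]$ for the final support while you use the slightly looser $[-k,N/2+k]$; both stay inside $[-N,N]$, so the conclusion is identical.
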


\begin{remark}\sl
In fact, for $4 k \leq N \leq n$, we have
\begin{equation*}
T_{n} (f_{k}) T_{n} (g_{k}) u_{N}^{-} = T_{N} (f_{k}) T_{N} (g_{k}) u_{N}^{-} .
\end{equation*}
\end{remark}

\begin{proof}
Recall that, if $u ,v$ are two functions of $L^{2} ( \T )$ such that $\supp \widehat{u} \subset [a,b]$ and $\supp \widehat{v} \subset [c,d]$, then $\supp \widehat{u v} \subset [a+c ,b+d]$. By definition,
\begin{equation} \label{a10}
T (f_{k}) T (g_{k}) u_{N}^{-} = \Pi f_{k} \Pi g_{k} \Pi \varphi^{-} (D_{N}) u_{N} = \Pi f_{k} \Pi g_{k} \Pi_{N} \varphi^{-} (D_{N}) u_{N} .
\end{equation}
Since $\supp \widehat{g_{k}} \subset [-k , k]$ and $\supp {\mathcal F} ( \Pi_{N}\varphi^{-} (D_{N}) u_{N} ) \subset [0 , N/4]$, the Fourier transform of the function $g_{k} \Pi_{N}\varphi^{-} (D_{N}) u_{N}$ is supported inside $[-k , N/4 + k] \subset [ -k , N ]$. In particular,
\begin{equation} \label{a11}
\Pi g_{k} \Pi_{N} \varphi^{-} (D_{N}) u_{N} = \Pi_{N} g_{k} \Pi_{N} \varphi^{-} (D_{N}) u_{N} ,
\end{equation}
and the Fourier transform of this function is supported inside $[0, N/4 + k]$. As before, the Fourier transform of 
\begin{equation*}
f_{k} \Pi_{N} g_{k} \Pi_{N} \varphi^{-} (D_{N}) u_{N}
\end{equation*}
is supported inside $[-k , N/4 + 2 k] \subset [ - k , N]$. Then
\begin{equation} \label{a12}
\Pi f_{k} \Pi_{N} g_{k} \Pi_{N} \varphi^{-} (D_{N}) u_{N} = \Pi_{N} f_{k} \Pi_{N} g_{k} \Pi_{N} \varphi^{-} (D_{N}) u_{N} .
\end{equation}
The lemma follows from \eqref{a10}, \eqref{a11} et \eqref{a12}.
\end{proof}

From \eqref{a1}, Lemma \ref{a15} and Lemma \ref{a5}, we get
\begin{equation} \label{a4}
T (f) T (g) u_{N}^{-} =  \lambda u_{N}^{-} + o_{k \to \infty} (1) + o_{N \to \infty}^{k} (1) ,
\end{equation}
for $4 k \leq N$. If $\lambda \notin \sigma ( T (f) T (g) )$, the operator $T (f) T (g) - \lambda$ is invertible and then
\begin{equation*}
u_{N}^{-} = o_{k \to \infty} (1) + o_{N \to \infty}^{k} (1) .
\end{equation*}
From \eqref{a19}, we obtain $1/3 \leq o_{k \to \infty} (1) + o_{N \to \infty}^{k} (1)$. Taking $k$ large enough and after $N$ large enough, it is clear that this is impossible. Thus,
\begin{equation*}
\lambda \in \sigma ( T (f) T (g) ) ,
\end{equation*}
which implies Theorem \ref{a8} under the assumption \eqref{a19}.

\Subsubsection{Concentration near the high frequencies}
\label{a17}

We replace the assumption \eqref{a19} by $\Vert u_{N}^{+} \Vert \geq 1/3$. Let
\begin{equation*}
\begin{aligned}
J : \\
{}^{}
\end{aligned}
\left\{ \begin{aligned}
&L^{2} (\T ) \ {\hbox to 14mm{\rightarrowfill}} &&L^{2} ( \T )  \\
&f (x) &&f(-x)
\end{aligned} \right.
\end{equation*}
Remark that $J ( u v ) = J (u) J (v)$. Using the notation $\Pi_{[a,b]} = {\bf 1}_{[a,b]} (D)$, we have $\Pi_{[a,b]} J = J \Pi_{[-b,-a]}$ and $\Pi_{[a,b]} e^{i c x} = e^{i c x} \Pi_{[a-c,b-c]}$. Combining these identities with Lemma \ref{a15}, we get
\begin{align}
T_{N} (J f_{k}) T_{N} (J g_{k} ) e^{i N x} ( J u_{N}^{+} ) =& \Pi_{[0,N]} (J f_{k}) \Pi_{[0,N]} (J g_{k}) \Pi_{[0,N]} e^{i N x} ( J u_{N}^{+} )   \nonumber \\
=& e^{i N x} \Pi_{[-N,0]} (J f_{k}) \Pi_{[-N,0]} (J g_{k}) \Pi_{[-N,0]} ( J u_{N}^{+} )   \nonumber \\
=& e^{i N x} \Pi_{[-N,0]} (J f_{k}) \Pi_{[-N,0]} (J g_{k}) J \Pi_{[0,N]} u_{N}^{+}   \nonumber \\
=& e^{i N x} \Pi_{[-N,0]} (J f_{k}) \Pi_{[-N,0]} J g_{k} \Pi_{[0,N]} u_{N}^{+}   \nonumber \\
=& e^{i N x} \Pi_{[-N,0]} (J f_{k}) J \Pi_{[0,N]} g_{k} \Pi_{[0,N]} u_{N}^{+}   \nonumber \\
=& e^{i N x} \Pi_{[-N,0]} J f_{k} \Pi_{[0,N]} g_{k} \Pi_{[0,N]} u_{N}^{+}   \nonumber \\
=& e^{i N x} J \Pi_{[0,N]} f_{k} \Pi_{[0,N]} g_{k} \Pi_{[0,N]} u_{N}^{+}   \nonumber \\
=& \lambda e^{i N x} (J u_{N}^{+}) + o_{k \to \infty} (1) + o_{N \to \infty}^{k} (1) .
\end{align}
In particular, $\widetilde{u}_{N}^{-} = e^{i N x} ( J u_{N}^{+} )$ satisfies $\Vert \widetilde{u}_{N}^{-} \Vert \geq 1/3$,
\begin{equation*}
T_{N} (J f_{k}) T_{N} (J g_{k} ) \widetilde{u}_{N}^{-} = \lambda \widetilde{u}_{N}^{-} + o_{k \to \infty} (1) + o_{N \to \infty}^{k} (1) ,
\end{equation*}
and the support of the Fourier transform of $\widetilde{u}_{N}^{-}$ is inside $[0 , N/4]$. In particular, we can apply the method developed in the case $\Vert u_{N}^{-} \Vert \geq 1/3$. The unique difference is that $f,g$ are replaced by $(J f) , (J g)$. Then we obtain
\begin{equation*}
\lambda \in \sigma \big( T (J f) T (J g) \big) .
\end{equation*}
Theorem \ref{a8} follows from the following lemma and $\sigma ( T (f) T (g) ) = \sigma ( T (g) T (f) )$ (The spectrum of $T (f) T (g)$ is real from Lemma \ref{a30} and $( T (f) T (g) -z )^{*} = T (g) T (f) - \overline{z}$.)

\begin{lemma}\sl
Let $f, g \in L^{\infty} ( \T )$. Then
\begin{equation*}
\sigma ( T (J f) T (J g) ) = \sigma ( T (g) T (f) ) .
\end{equation*}
\end{lemma}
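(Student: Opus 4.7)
The plan is to produce an antiunitary involution $K$ on $\im\Pi$ that conjugates $T(Jh)$ into $T(h)^{*}$ for every $h\in L^{\infty}(\T)$. Combining this with the algebraic identity $(T(g)T(f))^{*}=T(f)^{*}T(g)^{*}$ and the two standard spectral facts $\sigma(A^{*})=\overline{\sigma(A)}$ and $\sigma(KAK^{-1})=\overline{\sigma(A)}$ for antiunitary $K$ will yield the claimed equality $\sigma(T(Jf)T(Jg))=\sigma(T(g)T(f))$ in a few lines.

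Concretely, I would set $K\colon\im\Pi\to\im\Pi$, $(Ku)(x)=\overline{u(-x)}$. In Fourier variables this is $\widehat{Ku}_{k}=\overline{\widehat{u}_{k}}$ for $k\geq 0$, so $K$ is manifestly antilinear, isometric, involutive, and commutes with $\Pi$ (and with every spectral projector of $D$). The key computation is then the conjugation of a multiplication operator:
\begin{equation*}
(KhKu)(x)=\overline{h(-x)\,\overline{u(x)}}=\overline{Jh}(x)\,u(x),
\end{equation*}
so $KhK$ is multiplication by $\overline{Jh}$ for every $h\in L^{\infty}(\T)$. Sandwiching by $\Pi$, which commutes with $K$, gives $KT(h)K=T(\overline{Jh})$. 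Specializing to $h=Jf$ and $h=Jg$ and using $J^{2}=\mathrm{id}$ yields
\begin{equation*}
K\,T(Jf)\,K=T(\overline{f})=T(f)^{*}\qquad\text{and}\qquad K\,T(Jg)\,K=T(g)^{*}.
\end{equation*}

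Since $K^{2}=\mathrm{id}$, the map $A\mapsto KAK$ is multiplicative, so
\begin{equation*}
K\,T(Jf)T(Jg)\,K=T(f)^{*}T(g)^{*}=\bigl(T(g)T(f)\bigr)^{*}.
\end{equation*}
The elementary identity $KAK-\lambda=K(A-\overline{\lambda})K$ then gives $\sigma(KAK)=\overline{\sigma(A)}$, and combining with $\sigma(B^{*})=\overline{\sigma(B)}$ we conclude
\begin{equation*}
\sigma\bigl(T(Jf)T(Jg)\bigr)=\overline{\sigma\bigl((T(g)T(f))^{*}\bigr)}=\sigma\bigl(T(g)T(f)\bigr).
\end{equation*}
The only real subtlety is the antilinearity of $K$: one must insert $K^{2}=\mathrm{id}$ carefully when composing operators, remember that $K\lambda=\overline{\lambda}K$ on scalars, and accept that conjugation by an antiunitary reverses complex conjugation on the spectrum. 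Apart from this bookkeeping, no analytic input beyond elementary spectral theory is required, and no further Toeplitz-specific machinery enters the argument.
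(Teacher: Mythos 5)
Your argument is correct, and it is in substance a compact repackaging of the paper's own proof rather than a different strategy. The paper factors the relevant symmetry into a $\C$-linear ``transpose'' operation $A \mapsto A^t$, defined by $(A^t u, v) = (u, \overline{A\overline{v}})$ (so $A^t = (CAC)^*$ with $C$ complex conjugation), followed by conjugation by the unitary $J$, and then invokes the spectrum-preservation $\sigma(JA^tJ^{-1})=\sigma(A)$ via the identity $JA^tJ - z = J(A-z)^tJ$. You instead fold $J$ and $C$ into the single antiunitary involution $K=JC$ and pass through the adjoint, using the two conjugation-reversing facts $\sigma(KAK)=\overline{\sigma(A)}$ and $\sigma(B^*)=\overline{\sigma(B)}$. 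These compose to exactly the paper's spectrum-preservation, since $JA^tJ = J(CAC)^*J = (JCACJ)^* = (KAK)^*$. The antiunitary formulation is a bit tighter to state and makes the self-duality of the construction transparent; the paper's transpose formulation keeps every operator $\C$-linear, which avoids the bookkeeping you flag about antilinear conjugation. Your verification of the key identity $K h K = \overline{Jh}$, of $K\Pi = \Pi K$, and of $K T(Jh) K = T(h)^*$ is correct, and the final chain of spectral identities closes with no gap.
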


\begin{proof}
For $A$ a bounded linear operator on $L^{2}$, we define $A^{t}$ by
\begin{equation*}
( A^{t} u , v ) = ( u , \overline{A \overline{v}} ) ,
\end{equation*}
for all $u , v \in L^{2}$. Simple calculi give $f^{t} = f$, $\Pi_{[a,b]}^{t} = \Pi_{[- b,- a]}$, $(A B)^{t} = B^{t} A^{t}$ and then
\begin{equation*}
T (f)^{t} = \big( \Pi_{[0 , + \infty[} f \Pi_{[0 , + \infty[} \big)^{t} = \Pi_{] - \infty , 0 ]} f \Pi_{] - \infty , 0 ]} .
\end{equation*}
By the same way, since $J= J^{*} = J^{-1}$,
\begin{equation*}
J \Pi_{] - \infty , 0 ]} f \Pi_{] - \infty , 0 ]} J = \Pi_{[0, + \infty [} (J f) \Pi_{[0, + \infty [} = T (J f) .
\end{equation*}
Combining these identities concerning ${}^{t}$ and $J$, we get
\begin{align}
J ( T (J f) T (J g) )^{t} J^{-1} =& J \big( \Pi_{] - \infty , 0 ]} (J g) \Pi_{] - \infty , 0 ]} \big) \big( \Pi_{] - \infty , 0 ]} (J f) \Pi_{] - \infty , 0 ]} \big) J \nonumber \\
=& T (g) T (f) .
\end{align}
Since $J A^{t} J -z = J ( A -z)^{t} J$, $A$ and $J A^{t} J$ have the same spectrum and the lemma follows.
\end{proof}

\section{Proof of the other results}
\label{a52}

\Subsection{Proof of Lemma \ref{a30}}

In fact, we will prove the following result: Let $A, B$ be two bounded operators on a Hilbert space. Assume that $B$ is self-adjoint with $B \geq 0$. Then
\begin{equation}\label{a28}
\sigma (A B ) = \sigma \big( B^{1/2} A B^{1/2} \big) .
\end{equation}

Recall that, in a ring, $1 - a b$ is invertible if and only if $1 - b a$ is invertible (the inverse of $1 - b a$ is then $1 + b ( 1 - a b)^{-1} a$). In particular, 
\begin{equation*}
\sigma (A B ) \setminus \{ 0 \} = \sigma \big( B^{1/2} A B^{1/2} \big) \setminus \{ 0 \} ,
\end{equation*}
and it remains to study the value $0$.

Assume first that $0 \in \sigma ( B^{1/2} )$. By Weyl's criterion (see \cite[Theorem VII.12]{ReSi80_01}), there exist $u_{n}$, with $\Vert u_{n} \Vert =1$, such that $B^{1/2} u_{n} \to 0$. Then, $A B u_{n} \to 0$ and $B^{1/2} A B^{1/2} u_{n}\to 0$. Thus, $0$ is in the spectrum of the two operators $A B$ and $B^{1/2} A B^{1/2}$.

Assume now that $0 \notin \sigma ( B^{1/2} )$. Then, $B^{1/2}$ is invertible. In particular, $A B$ is invertible if and only if $B^{1/2} A B^{1/2}$ is invertible. Thus, $0$ is in the spectrum of $A B$ if and only if $0$ is in the spectrum of $B^{1/2} A B^{1/2}$.

In the case of a finite dimensional Hilbert space, we use the formula
\begin{equation*}
\det ( A B  - \lambda ) = \det (B^{1/2} A B^{1/2} - \lambda ).
\end{equation*}
In particular, the operators $T_{n} (f) T_{n} (g)$ and $T_{n} (g)^{1/2} T_{n} (f) T_{n} (g)^{1/2}$ have the same eigenvalues with the same multiplicity.

\Subsection{Proof of Theorem \ref{a29}}

Here, we show that Theorem \ref{a8} implies Theorem \ref{a29}. We drop the subscript $(f,g)$ and denote $\lambda_{\limsup} = \limsup \lambda_{\max}^{n}$. By definition,
\begin{equation} \label{a33}
\lambda_{\max}^{n} \leq \lambda_{\limsup} + o_{n \to \infty} (1) ,
\end{equation}
and there exists a subsequence $N$ of $\N$ such that $\lambda_{\max}^{N} \to \lambda_{\limsup}$. In particular, using the implication {\it iii)}$\Rightarrow${\it i)} of Theorem \ref{a8}, $\lambda_{\limsup}$ is in the spectrum of $T (f) T (g)$ and then
\begin{equation} \label{a34}
\lambda_{\limsup} \leq \lambda_{\max} .
\end{equation}
On the other hand, according to the implication {\it i)}$\Rightarrow${\it ii)} of Theorem \ref{a8}, there exists $\lambda_{n} \in \sigma ( T_{n} (f) T_{n} (g) )$ such that $\lambda_{n} \to \lambda_{\max}$. Thus,
\begin{equation} \label{a35}
\lambda_{\max} + o_{n \to \infty} (1) \leq \lambda_{n} \leq \lambda_{\max}^{n}.
\end{equation}
Combining \eqref{a33}, \eqref{a34} and \eqref{a35}, we finally obtain $\lambda_{\max}^{n} \to \lambda_{\max} = \lambda_{\limsup}$.

\Subsection{Proof of Example \ref{a40}}
\label{a38}

The goal of this section is to study the extrema of the limiting spectrum of
the product
$T_n(f)T_n(g)$ in the particular case
\begin{equation*}
f(x)=a+\cos(x) \qquad \text{ and } \qquad g(x)=\frac{1}{1+\theta^2-2\theta
\cos(x)}
\end{equation*}
where $a, \theta \in \dR$ and $\vert \theta \vert<1$.

First of all, one can observe
that it is more convenient to work with the inverse
of $T_n(g)$. As a matter of fact, $T_n(g)^{-1}$ is a tridiagonal matrix quite
similar to $T_n(g^{-1})$ except that, at the two diagonal corners of $T_n(g^{-1})$, the coefficient
$1+\theta^2$ is replaced by $1$
\begin{equation*}
T_{n} (g)^{-1}  = \left( \begin{array}{cccc}
1 & - \theta & 0 & \ldots   \\
- \theta & 1 + \theta^{2} & - \theta & \ldots  \\
\ldots & \ldots & \ldots & \ldots   \\
\ldots & - \theta & 1+ \theta^{2} & - \theta \\
\ldots & 0 & - \theta & 1 \\
\end{array} \right) .
\end{equation*}

It is not hard to see that $\det(T_n(g)^{-1} ) = 1 - \theta^2$.
In order to find the eigenvalues $\lambda$ of the product $T_n(f)T_n(g)$, it is equivalent to calculate the zeros of its characteristic polynomial which correspond also to the zeros of $\det(M_{n} (t))$ where
\begin{equation*}
M_{n} (t) = t T_{n} (f) -  T_{n} (g)^{-1} ,
\end{equation*}
with $t = 1/ \lambda$. As $T_n(f)$ and $T_n(g)^{-1}$ are both tridiagonal matrices, we can easily compute 
$\det(M_n (t))$.

Via the same lines than in \cite[Lemma 11]{BeGaRo97_01}, we find that for $n$ large enough, 
$M_n (t)$ is negative definite only on the domain $\mathcal{D} =\mathcal{D}_1 \cup \mathcal{D}_2$
where
\begin{equation*}
\mathcal{D}_1 =\{-2\theta^2<p\leq -\theta^2 \ \text{and}\ q^2<-4\theta^2(p+\theta^2)\} \quad \text{ and } \quad \mathcal{D}_2 =\{p<- 2\theta^2 \ \text{and}\ p<-|q|\},
\end{equation*}
with $p=a t-(1+\theta^2)$ and $q= t +2\theta$. In term of the variable $\lambda$, the inverses of the boundaries of $\mathcal{D}$ give the extrema of the spectrum of the $T(f)T(g)$ {\it i.e.} $\lambda_{\max}(f,g)$ and $\lambda_{\min}(f,g)$. After some tedious but straightforward calculations, we obtain three inverses of the boundaries
\begin{equation*}
\frac{a-1}{(1+\theta)^2},
\hspace{1.5cm}
\frac{a+1}{(1-\theta)^2},
\hspace{1.5cm}
-\frac{1}{4\theta(1+a\theta)}.
\end{equation*}
Two of them coincide with $\inf(fg)$ and $\sup(fg)$.
It only depends on the location of $a$ with respect to
$-(1+\theta^2)/(2\theta)$. The last one can be $\lambda_{\max} (f ,g)>\sup(fg)$
or $\lambda_{\min} (f ,g)<\inf(fg)$. It only depends on the sign of $\theta$ as well
as on the location of $a$ with respect to the interval $[a_{\theta},b_{\theta}]$.

\section{An altenative proof of Douglas's theorem}
\label{a41}

Let $\psi \in C^{\infty} (\R)$ satisfying $\psi = 1$ near $[ 2 , + \infty [$ and $\psi = 0$ near $]- \infty , 1 ]$. For $\varepsilon >0$, we have on $\im \Pi$
\begin{align}
T (g)^{1/2} T (f) T (g)^{1/2} =& T (g)^{1/2} \psi ( \varepsilon D) T (f) \psi ( \varepsilon D)  T (g)^{1/2} + \widetilde{R}_{\varepsilon}   \nonumber  \\
=& T (g)^{1/2} \psi ( \varepsilon D) f \psi ( \varepsilon D)  T (g)^{1/2} + \widetilde{R}_{\varepsilon} , \label{a22}
\end{align}
where
\begin{equation*}
\widetilde{R}_{\varepsilon} = T (g)^{1/2} (1 - \psi ( \varepsilon D) ) T (f) \psi ( \varepsilon D)  T (g)^{1/2} + T (g)^{1/2} T (f) (1 - \psi ( \varepsilon D)) T (g)^{1/2} ,
\end{equation*}
is a self-adjoint operator of finite rank. Recall that if $A \geq 0$ is a bounded operator with $\Vert A \Vert \leq 1$, then
\begin{equation*}
A^{1/2} = \sum_{j=0}^{+ \infty} c_{j} (1 -A)^{j} ,
\end{equation*}
where $\Vert 1 - A \Vert \leq 1$ and $\sum_{j \geq 0} \vert c_{j} \vert \leq 2 < + \infty$ (see the proof of Theorem VI.9 of \cite{ReSi80_01}). On the other hand, Lemma \ref{a26} implies
\begin{align}
T (g) \psi (\varepsilon D ) =& \Pi g \Pi \psi (\varepsilon D ) = \Pi g \psi (\varepsilon D ) = \Pi \psi (\varepsilon D ) g + o_{\varepsilon \to 0} (1)  \nonumber  \\
=& \psi (\varepsilon D ) g + o_{\varepsilon \to 0} (1) ,
\end{align}
Then, for a fixed $\delta >0$ such that $\Vert T (g) \Vert \leq \Vert g \Vert_{\infty} < \delta^{-1}$, we have
\begin{align}
T (g)^{1/2} \psi ( \varepsilon D) =& \delta^{-1/2} T (\delta g)^{1/2} \psi ( \varepsilon D)     \nonumber  \\
=& \delta^{-1/2} \sum_{j=0}^{+ \infty} c_{j} (1 - T (\delta g))^{j} \psi ( \varepsilon D)   \nonumber  \\
=& \delta^{-1/2} \sum_{j=0}^{J} c_{j} (1 - T (\delta g))^{j} \psi ( \varepsilon D) + o_{J \to \infty} (1)    \nonumber \\
=& \delta^{-1/2} \psi ( \varepsilon D) \sum_{j=0}^{J} c_{j} (1 - \delta g)^{j} + o_{J \to \infty} (1) + o_{\varepsilon \to 0}^{J} (1)    \nonumber \\
=& \delta^{-1/2} \psi ( \varepsilon D) (\delta g)^{1/2} + o_{J \to \infty} (1) + o_{\varepsilon \to 0}^{J} (1)   \nonumber  \\
=& \psi ( \varepsilon D) g^{1/2} + o_{\varepsilon \to 0} (1) ,
\end{align}
since these quantities do not depend on $J$. Using this identity and its adjoint, \eqref{a22} becomes
\begin{align}
T (g)^{1/2} T (f) T (g)^{1/2} =& \psi ( \varepsilon D) f g \psi ( \varepsilon D) + \widetilde{R}_{\varepsilon} + o_{\varepsilon \to 0} (1)  \nonumber \\
=& T ( f g ) + R_{\varepsilon} + e_{\varepsilon} , \label{a23}
\end{align}
where $e_{\varepsilon} = o_{\varepsilon \to 0} (1)$ and
\begin{equation*}
R_{\varepsilon} = \widetilde{R}_{\varepsilon} + (\psi ( \varepsilon D) -1) T (f g) \psi ( \varepsilon D) + T (f g) ( \psi ( \varepsilon D) -1),
\end{equation*}
is a self-adjoint operator of finite rank. In particular, $e_{\varepsilon}$ is a self-adjoint operator. Since, on the image on $\Pi$,
\begin{equation*}
\min (f g ) \leq T (f g) \leq \max (f g),
\end{equation*}
we get $\sigma ( T (f g) + e_{\varepsilon} ) \subset [ \min (f g ) - o_{\varepsilon \to 0} (1) , \max (f g) + o_{\varepsilon \to 0} (1)]$. Since $R_{\varepsilon}$ is of finite rank, we obtain, from Weyl's theorem \cite[Theorem S.13]{ReSi80_01},
\begin{equation*}
\sigma_{{\rm ess}} \big( T (g)^{1/2} T (f) T (g)^{1/2} \big) = \sigma_{{\rm ess}} ( T (f g) + e_{\varepsilon} ) \subset [ \min (f g ) - o_{\varepsilon \to 0} (1) , \max (f g) + o_{\varepsilon \to 0} (1)] .
\end{equation*}
As the essential spectrum of $T (g)^{1/2} T (f) T (g)^{1/2}$ does not depend on $\varepsilon$, we get
\begin{equation} \label{a37}
\sigma_{{\rm ess}} \big( T (g)^{1/2} T (f) T (g)^{1/2} \big) \subset [ \min (f g ) , \max (f g) ] ,
\end{equation}
which is the first inclusion of the theorem.

Now, let $\varphi \in C^{\infty} ( [ -1 , 1] , [ 0 ,1])$ with $\Vert \varphi \Vert_{L^{2}} =1$. For $x_{0} \in \T$ and $\alpha , \beta \in \N$, we set
\begin{equation*}
u = \alpha^{1/2} \varphi \big( \alpha (x - x_{0} ) \big) e^{i \beta x}  \quad \text{ and } \quad v = \Pi u \in \im \Pi ,
\end{equation*}
which satisfies $\Vert u \Vert =1$. We have
\begin{align}
( 1 - \Pi ) u =& \alpha^{1/2} {\bf 1}_{] - \infty , 0 ]} (D) e^{i \beta x} \varphi \big( \alpha (x - x_{0} ) \big)   \nonumber  \\
=& \alpha^{1/2} e^{i \beta x} {\bf 1}_{] - \infty , - \beta ]} (D) \varphi \big( \alpha (x - x_{0} ) \big)  \nonumber  \\
=& \alpha^{1/2} e^{i \beta x} {\bf 1}_{] - \infty , - \beta ]} (D) (D + i)^{-M} ( D+i)^{M} \varphi \big( \alpha (x - x_{0} ) \big)   \nonumber  \\
=& \CO \big( \beta^{-M} \alpha^{M} \big), \label{a39}
\end{align}
in $L^{2}$ norm for any $M \in \N$. Moreover, for a continuous function $\ell$, we have
\begin{equation}
\ell u = \ell ( x_{0} ) \alpha^{1/2} \varphi \big( \alpha (x - x_{0} ) \big) e^{i \beta x} + o_{\alpha \to \infty} (1) ,
\end{equation}
in $L^{2}$ norm. Using that $\Vert T ( \ell )^{1/2} \Vert \leq \Vert \ell \Vert_{\infty}^{1/2}$, for all function $\ell \in L^{\infty}$ with $\ell \geq 0$, we get
\begin{align}
T (f) T (g) v =& \Pi f \Pi g \Pi u = \Pi f \Pi g u + \CO \big( \alpha \beta^{-1} \big)    \nonumber  \\
=& g (x_{0}) \Pi f \Pi u + \CO \big( \alpha \beta^{-1} \big) + o_{\alpha \to \infty} (1)   \nonumber  \\
=& g (x_{0}) \Pi f u + \CO \big( \alpha \beta^{-1} \big) + o_{\alpha \to \infty} (1)    \nonumber  \\
=& (f g) (x_{0}) \Pi u + \CO \big( \alpha \beta^{-1} \big) + o_{\alpha \to \infty} (1)   \nonumber  \\
=& (f g) (x_{0}) v + \CO \big( \alpha \beta^{-1} \big) + o_{\alpha \to \infty} (1) . \label{a18}
\end{align}

Taking $\beta = \alpha^{2} \to + \infty$, \eqref{a39} implies $\Vert v \Vert = 1 + o_{\alpha \to \infty} (1)$. On the other hand, \eqref{a18} gives
\begin{equation*}
T (f) T (g) v = (f g) (x_{0}) v + o_{\alpha \to \infty} (1) .
\end{equation*}
Then, $(f g) (x_{0}) \in \sigma ( T (f) T (g) ) = \sigma ( T (g)^{1/2} T (f) T (g)^{1/2} )$ from Lemma \ref{a30}. Therefore,
\begin{equation} \label{a36}
\big[ \inf (f g) , \sup (f g) \big] \subset \sigma \big( T (g)^{1/2} T (f) T (g)^{1/2} \big) .
\end{equation}

Recall that the essential spectrum of a self-adjoint bounded operator on an infinite Hilbert space is never empty. Therefore, if $\inf (f g) = \sup (f g)$, \eqref{a37} implies the theorem.

Assume now that $\inf (f g) < \sup (f g)$. Then $[ \inf (f g) , \sup (f g) ]$ is an interval with non empty interior. From the definition of the essential spectrum, this interval is necessarily inside the essential spectrum of $T (g)^{1/2} T (f) T (g)^{1/2}$. This achieves the proof of the second inclusion of the theorem.

\providecommand{\bysame}{\leavevmode\hbox to3em{\hrulefill}\thinspace}
\providecommand{\MR}{\relax\ifhmode\unskip\space\fi MR }
% \MRhref is called by the amsart/book/proc definition of \MR.
\providecommand{\MRhref}[2]{%
  \href{http://www.ams.org/mathscinet-getitem?mr=#1}{#2}
}
\providecommand{\href}[2]{#2}

%\bibliographystyle{amsplain}
%\bibliography{refbbb}

\end{document}